\newtheorem{theorem}{Theorem}[section]
\newtheorem{lemma}[theorem]{Lemma}
\newtheorem{corollary}[theorem]{Corollary}
\theoremstyle{definition}
\theoremstyle{remark}
\numberwithin{equation}{section}
\begin{document}
\title[number of complete subgraphs of Peisert graphs]
{number of complete subgraphs of Peisert graphs and finite field hypergeometric functions}


\author{Anwita Bhowmik}
 \address{Department of Mathematics, Indian Institute of Technology Guwahati, North Guwahati, Guwahati-781039, Assam, INDIA}
\email{anwita@iitg.ac.in}
 \author{Rupam Barman}
 \address{Department of Mathematics, Indian Institute of Technology Guwahati, North Guwahati, Guwahati-781039, Assam, INDIA}
 \email{rupam@iitg.ac.in}


\subjclass[2020]{05C25; 05C30; 11T24; 11T30}
\date{9th May 2022}
\keywords{Peisert graphs; clique; finite fields; character sums; hypergeometric functions over finite fields}
\begin{abstract}
For a prime $p\equiv 3\pmod{4}$ and a positive integer $t$, let $q=p^{2t}$. Let $g$ be a primitive element of the finite field $\mathbb{F}_q$. 
The Peisert graph $P^\ast(q)$ is defined as the graph with vertex set $\mathbb{F}_q$ where $ab$ is an edge if and only if $a-b\in\langle g^4\rangle \cup g\langle g^4\rangle$. 
We provide a formula, in terms of finite field hypergeometric functions, for the number of complete subgraphs of order four contained in $P^\ast(q)$. 
We also give a new proof for the number of complete subgraphs of order three  contained in $P^\ast(q)$ by evaluating certain character sums. The computations for the number of complete subgraphs of order four are quite tedious, 
so we further give an asymptotic result for the number of complete subgraphs of any order $m$ in Peisert graphs.
\end{abstract}
\maketitle
\section{introduction and statements of results}
The arithmetic properties of Gauss and Jacobi sums have a very long history in number theory, with applications in Diophantine equations and the theory of $L$-functions. Recently, number theorists have obtained generalizations of classical hypergeometric functions that are assembled with these sums, and these functions have recently led to applications in graph theory. Here we make use of these functions, as developed by Greene, McCarthy, and Ono \cite{greene, greene2,mccarthy3, ono2} to study substructures in Peisert graphs, which are relatives of the well-studied Paley graphs.
\par The Paley graphs are a well-known family of undirected graphs constructed from the elements of a finite field. Named after Raymond Paley, they were introduced as graphs independently by Sachs in 1962 and Erd\H{o}s \& R\'enyi in 1963, 
inspired by the construction of Hadamard matrices in Paley's paper \cite{paleyp}. Let $q\equiv 1\pmod 4$ be a prime power. Then the Paley graph of order $q$ is the graph with vertex set as the finite field $\mathbb{F}_q$ and edges defined as, 
$ab$ is an edge if $a-b$ is a non-zero square in $\mathbb{F}_q$.
\par
It is natural to study the extent to which a graph exhibits symmetry. A graph is called \textit{symmetric} if, given any two edges $xy$ and $x_1y_1$, there exists a graph automorphism sending $x$ to $x_1$ and $y$ to $y_1$. 
Another kind of symmetry occurs if a graph is isomorphic to its complement, in which case the graph is called \textit{self-complementary}. While Sachs studied the self-complementarity properties of the Paley graphs, Erd\H{o}s \& R\'enyi were interested in their symmetries. 
It turns out that the Paley graphs are both self-complementary and symmetric.
\par 
It is a natural question to ask for the classification of all self-complementary and symmetric (SCS) graphs. In this direction, Chao's classification in \cite{chao} sheds light on the fact that the only such possible graphs of prime order are the Paley graphs. 
Zhang in \cite{zhang}, gave an algebraic characterization of SCS graphs using the classification of finite simple groups, although it did not follow whether one could find such graphs other than the Paley graphs. 
In 2001, Peisert gave a full description of SCS graphs as well as their automorphism groups in \cite{peisert}. He derived that there is another infinite family of SCS graphs apart from the Paley graphs, and, in addition, one more graph not belonging to any of the two former families. 
He constructed the $P^\ast$-graphs (which are now known as \textit{Peisert graphs}) as follows. For a prime $p\equiv 3\pmod{4}$ and a positive integer $t$, let $q=p^{2t}$. Let $g$ be a primitive element of the finite field $\mathbb{F}_q$, 
that is, $\mathbb{F}_q^\ast=\mathbb{F}_q\setminus\{0\}=\langle g\rangle$. Then the Peisert graph $P^\ast(q)$ is defined as the graph with vertex set $\mathbb{F}_q$ where $ab$ is an edge if and only if $a-b\in\langle g^4\rangle \cup g\langle g^4\rangle$. 
It is shown in \cite{peisert} that the definition is independent of the choice of $g$. It turns out that an edge is well defined, since $q\equiv 1\pmod 8$ implies that $-1\in\langle g^4\rangle$.
\par
We know that a complete subgraph, or a clique, in an undirected graph is a set of vertices such that every two distinct vertices in the set are adjacent. The number of vertices in the clique is called the order of the clique. 
Let $G^{(n)}$ denote a graph on $n$ vertices and let $\overline{G^{(n)}}$ be its complement. 
Let $k_m(G)$ denote the number of cliques of order $m$ in a graph $G$. Let $T_m(n)=\text{min}\left(k_m(G^{(n)})+ k_m(\overline{G^{(n)}})\right) $ where the minimum is taken over all graphs on $n$ vertices. 
Erd\H{o}s \cite{erdos}, Goodman \cite{goodman} and Thomason \cite{thomason} studied $T_m(n)$ for different values of $m$ and $n$. Here we note that the study of $T_m(n)$ can be linked to Ramsey theory. 
This is because, the diagonal Ramsey number $R(m,m)$ is the smallest positive integer $n$ such that $T_m(n)$ is positive. Also, for the function $k_m(G^{(n)})+ k_m(\overline{G^{(n)}})$ on graphs with $n=p$ vertices, $p$ being a prime,  
Paley graphs are minimal in certain ways; for example, in order to show that $R(4,4)$ is atleast $18$, the Paley graph with $17$ vertices acts as the only graph (upto isomorphism) such that $k_m(G^{(17)})+ k_m(\overline{G^{(17)}})=0$. 
What followed was a study on $k_m(G)$, $G$ being a Paley graph. Evans et al. \cite{evans1981number} and Atansov et al. \cite{atanasov2014certain} gave formulae for $k_4(G)$, where $G$ is a Paley graph with number of vertices a prime and a prime-power, 
respectively. One step ahead led to generalizations of Paley graphs by Lim and Praeger \cite{lim2006generalised}, and computing the number of cliques of orders $3$ and $4$ in those graphs by Dawsey and McCarthy \cite{dawsey}. 
Very recently, we \cite{BB} have defined \emph{Paley-type} graphs of order $n$ as follows. For a positive integer $n$, the Paley-type graph $G_n$ has the finite commutative ring $\mathbb{Z}_n$ as its vertex set and edges defined as, $ab$ is an edge if 
and only if $a-b\equiv x^2\pmod{n}$ for some unit $x$ of $\mathbb{Z}_n$. For primes $p\equiv 1\pmod{4}$ and any positive integer $\alpha$, we have also found the number of cliques of order $3$ and $4$ in the Paley-type graphs $G_{p^{\alpha}}$. 
\par 
The Peisert graphs lie in the class of SCS graphs alongwith Paley graphs, so it would serve as a good analogy to study the number of cliques in the former class too. There is no known formula for the number of cliques of order $4$ in 
Peisert graph $P^{\ast}(q)$. The main purpose of this paper is to provide a general formula for $k_4(P^\ast(q))$. In \cite{jamesalex2}, Alexander found the number of cliques of order $3$ using the properties that the 
Peisert graph are edge-transitive and that any pair of vertices connected by an edge have the same number of common neighbors (a graph being edge-transitive means that, given any two edges in the graph, there exists a graph automorphism sending one edge to the other). 
In this article, we follow a character-sum approach to compute the number of cliques of orders $3$ and $4$ in Peisert graphs. In the following theorem, we give a new proof for the number of cliques of orders $3$ in Peisert graphs by evaluating certain character sums.
\begin{theorem}\label{thm1}
Let $q=p^{2t}$, where $p\equiv 3\pmod 4$ is a prime and $t$ is a positive integer. Then, the number of cliques of order $3$ in the Peisert graph $P^{\ast}(q)$ is given by $$k_3(P^\ast(q))=\dfrac{q(q-1)(q-5)}{48}.$$
\end{theorem}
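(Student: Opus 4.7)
The plan is to expand the edge indicator into multiplicative characters, convert the triangle count into a finite set of character sums, and then observe that modular constraints leave only elementary ones to evaluate.

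Let $\chi$ be the multiplicative character of order $4$ on $\mathbb{F}_q^*$ defined by $\chi(g) = i$, extended by $\chi(0) = 0$. Since the connection set $S := \langle g^4\rangle \cup g\langle g^4\rangle$ is exactly $\{a \in \mathbb{F}_q^* : \chi(a) \in \{1, i\}\}$, Fourier inversion on the group $\langle i\rangle$ of fourth roots of unity gives for $a \in \mathbb{F}_q^*$
\begin{equation*}
\mathbf{1}_S(a) = \tfrac{1}{2} + \tfrac{1-i}{4}\chi(a) + \tfrac{1+i}{4}\bar\chi(a),
\end{equation*}
with the $\chi^2$-coefficient vanishing because $q \equiv 1 \pmod 8$ forces $\chi(-1) = 1$.

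Next I count ordered triangles, use translation invariance to fix one vertex at $0$ (picking up a factor of $q$), and expand the product of three indicators to write
\begin{equation*}
k_3(P^*(q)) = \frac{q}{6}\sum_{i,j,k \in \{0,1,3\}} c_i c_j c_k\, T(i,j,k), \qquad T(i,j,k) := \sum_{\substack{x, y \in \mathbb{F}_q^* \\ x \neq y}} \chi^i(x)\chi^j(y)\chi^k(y-x),
\end{equation*}
with $c_0 = 1/2$, $c_1 = (1-i)/4$, $c_3 = (1+i)/4$. The substitution $y = tx$ factors each $T(i,j,k)$ as $\bigl(\sum_{x \in \mathbb{F}_q^*} \chi^{i+j+k}(x)\bigr)\bigl(\sum_{t \neq 0,1} \chi^j(t)\chi^k(t-1)\bigr)$, killing it unless $i+j+k \equiv 0 \pmod 4$. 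Under this constraint together with $(i,j,k) \in \{0,1,3\}^3$, the only surviving indices are $(0,0,0)$ and the six permutations of $(0,1,3)$. The first contributes the main term $\tfrac{1}{8}(q-1)(q-2)$; for each of the latter, using $\chi(-1) = 1$ the inner $t$-sum reduces either to the degenerate Jacobi sum $J(\chi,\bar\chi) = -\chi(-1) = -1$ or to $-\chi^{\pm 1}(1) = -1$, while the prefactor $c_0 c_1 c_3$ equals $1/16$.

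Assembling the seven contributions gives
\begin{equation*}
k_3(P^*(q)) = \frac{q}{6}\cdot\frac{(q-1)(q-2) - 3(q-1)}{8} = \frac{q(q-1)(q-5)}{48}.
\end{equation*}
The main obstacle is structural rather than analytic: the genuinely hard Jacobi sums $J(\chi,\chi)$ and $J(\bar\chi,\bar\chi)$ (of absolute value $\sqrt q$, with exact values depending on subtle $p \equiv 3 \pmod 4$ arithmetic) would a priori appear through inner indices $(j,k) = (1,1)$ or $(3,3)$, but the modular constraint then demands $i = 2$, and the vanishing of $c_2$ in the Fourier expansion of $\mathbf{1}_S$ kills precisely those terms. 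Confirming this conspiracy — and thereby confirming that every nontrivial character sum one meets reduces to $-1$ by elementary orthogonality — is the only step that requires real care.
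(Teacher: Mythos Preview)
Your argument is correct, and it starts from the same character expansion of the edge indicator as the paper (the paper's $h=1-\chi_4(g)$ is your $1-i$). The execution, however, is organized differently in one useful way. After fixing a vertex at $0$, you factor each double sum via $y=tx$, which separates the variables and forces $i+j+k\equiv 0\pmod 4$; combined with $c_2=0$ this rules out the inner indices $(j,k)\in\{(1,1),(3,3)\}$, so the nontrivial Jacobi sums $J(\chi,\chi)$ and $J(\bar\chi,\bar\chi)$ never enter. The paper instead expands the double sum over $x,y$ directly, does meet terms of the form $\varphi(x)J(\chi_4,\chi_4)$ and its conjugate, and then invokes Lemma~\ref{rr} (the reality of $J(\chi_4,\chi_4)=-(-p)^t$, an arithmetic fact specific to $p\equiv 3\pmod 4$) to cancel them. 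Your route therefore dispenses with that input entirely: the only Jacobi sum you need is the degenerate $J(\chi,\bar\chi)=-\chi(-1)=-1$.

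One minor correction: your stated reason for $c_2=0$ is not right. The vanishing of the $\chi^2$-coefficient comes from $1^{-2}+i^{-2}=0$ in the Fourier inversion on $\langle i\rangle$ (equivalently, $S$ contains equally many squares and non-squares), and this holds regardless of the value of $\chi(-1)$. The condition $\chi(-1)=1$ is what you actually use later, to ensure $S=-S$ and to evaluate $J(\chi,\bar\chi)=-1$.
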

Next, we find the number of cliques of order $4$ in Peisert graphs. In this case, the character sums are difficult to evaluate. We use finite field hypergeometric functions to evaluate some of the character sums. Before we state our result on $k_4(P^\ast(q))$, 
we recall Greene's finite field hypergeometric functions from \cite{greene, greene2}. Let $p$ be an odd prime, and let $\mathbb{F}_q$ denote the finite field with $q$ elements, where $q=p^r, r\geq 1$.
Let $\widehat{\mathbb{F}_q^{\times}}$ be the group of all multiplicative
characters on $\mathbb{F}_q^{\times}$. We extend the domain of each $\chi\in \widehat{\mathbb{F}_q^{\times}}$ to $\mathbb{F}_q$ by 
setting $\chi(0)=0$ including the trivial character $\varepsilon$. For multiplicative characters $A$ and $B$ on $\mathbb{F}_q$, the binomial coefficient ${A \choose B}$ is defined by
\begin{align*}
{A \choose B}:=\frac{B(-1)}{q}J(A,\overline{B}),
\end{align*}
where $J(A, B)=\displaystyle \sum_{x \in \mathbb{F}_q}A(x)B(1-x)$ denotes the Jacobi sum and $\overline{B}$ is the character inverse of $B$. 
For a positive integer $n$, and $A_0,\ldots, A_n, B_1,\ldots, B_n\in \widehat{\mathbb{F}_q^{\times}}$, Greene \cite{greene, greene2} defined the ${_{n+1}}F_n$- finite field hypergeometric function over
$\mathbb{F}_q$ by
\begin{align*}
{_{n+1}}F_n\left(\begin{array}{cccc}
A_0, & A_1, & \ldots, & A_n\\
& B_1, & \ldots, & B_n
\end{array}\mid x \right)
:=\frac{q}{q-1}\sum_{\chi\in \widehat{\mathbb{F}_q^\times}}{A_0\chi \choose \chi}{A_1\chi \choose B_1\chi}
\cdots {A_n\chi \choose B_n\chi}\chi(x).
\end{align*}
For $n=2$, we recall the following result from \cite[Corollary 3.14]{greene}: 
$${_{3}}F_{2}\left(\begin{array}{ccc}
A, & B, & C \\
& D, & E
\end{array}| \lambda\right)=\sum\limits_{x,y\in\mathbb{F}_q}A\overline{E}(x)\overline{C}E(1-x)B(y)\overline{B}D(1-y)\overline{A}(x-\lambda y).$$
Some of the biggest motivations for studying finite field hypergeometric functions have been their connections with Fourier coefficients and eigenvalues of modular forms and with counting points on certain kinds of algebraic varieties. For example, Ono \cite{ono} gave formulae for the number of $\mathbb{F}_p$-points on elliptic curves in terms of special values of Greene's finite field hypergeometric functions. In \cite{ono2}, Ono wrote a beautiful chapter on finite field hypergeometric functions and mentioned several open problems on hypergeometric functions and their relations to modular forms and algebraic varieties. In recent times, many authors have studied and found solutions to some of the problems posed by Ono. 
\par Finite field hypergeometric functions are useful in the study of Paley graphs, see for example \cite{dawsey, wage}. In the following theorem, we express the number of cliques of order $4$ in Peisert graphs in terms of finite field hypergeometric functions.
\begin{theorem}\label{thm2}
	Let $p$ be a prime such that $p\equiv 3\pmod 4$. For a positive integer $t$, let $q=p^{2t}$. Let $q=u^2+2v^2$ for integers $u$ and $v$ such that $u\equiv 3\pmod 4$ and $p\nmid u$ when $p\equiv 3\pmod 8$.	If $\chi_4$ is a character of order $4$, then the number of cliques of order $4$ in the Peisert graph $P^{\ast}(q)$ is given by 
	\begin{align*}
	k_4(P^\ast(q))=\frac{q(q-1)}{3072}\left[2(q^2-20q+81)+2 u(-p)^t+3q^2\cdot {_{3}}F_{2}\left(\begin{array}{ccc}
	\hspace{-.12cm}\chi_4, &\hspace{-.14cm} \chi_4, &\hspace{-.14cm} \chi_4^3 \\
	& \hspace{-.14cm}\varepsilon, &\hspace{-.14cm} \varepsilon
	\end{array}| 1\right)
	 \right].
	\end{align*}
\end{theorem}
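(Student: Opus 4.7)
My plan is to use a character sum approach analogous to the treatment of generalized Paley graphs in \cite{dawsey}. Since $P^\ast(q)$ is the Cayley graph $\mathrm{Cay}(\mathbb{F}_q,S)$ with $S=\langle g^4\rangle\cup g\langle g^4\rangle$ of size $(q-1)/2$, it is vertex-transitive; fixing one vertex of each $4$-clique to be $0$ and writing $\mathbb{1}_S$ for the indicator of $S$ gives
\begin{align*}
k_4(P^\ast(q)) = \frac{q}{24}\sum_{a,b,c \in \mathbb{F}_q}\mathbb{1}_S(a)\mathbb{1}_S(b)\mathbb{1}_S(c)\mathbb{1}_S(b-a)\mathbb{1}_S(c-a)\mathbb{1}_S(c-b).
\end{align*}
Choosing $\chi_4$ so that $\chi_4(g)=i$, membership $x\in S$ is equivalent to $\chi_4(x)\in\{1,i\}$, which yields
\begin{align*}
\mathbb{1}_S(x) = \tfrac{1}{4}\bigl(2\varepsilon(x)+(1-i)\chi_4(x)+(1+i)\chi_4^3(x)\bigr)\quad (x\neq 0);
\end{align*}
the $\chi_4^2$ term drops out. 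Note also that $q\equiv 1\pmod 8$ forces $\chi_4(-1)=1$, which I shall use to treat signs of differences freely.

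Substituting the expansion into the sextuple product produces a weighted sum over $\mathbf{j}=(j_1,\dots,j_6)\in\{0,1,3\}^6$ of character sums
\begin{align*}
C(\mathbf{j}) = \sum_{a,b,c \in \mathbb{F}_q}\chi_4^{j_1}(a)\chi_4^{j_2}(b)\chi_4^{j_3}(c)\chi_4^{j_4}(b-a)\chi_4^{j_5}(c-a)\chi_4^{j_6}(c-b),
\end{align*}
weighted by products drawn from $\{1/2,(1-i)/4,(1+i)/4\}$. I would then sort the tuples into three kinds: (i) tuples in which some variable appears only through $\varepsilon$, so that summing out that variable yields $q-1$ times a simpler sum; (ii) tuples for which repeated summation and orthogonality reduce $C(\mathbf{j})$ to products of Jacobi sums $J(\chi_4^r,\chi_4^s)$; and (iii) tuples whose $C(\mathbf{j})$ is intrinsically hypergeometric. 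For a tuple of class (iii) with $j_1+\cdots+j_6\equiv 0\pmod 4$, the substitution $b=ay$, $c=ax$ for $a\in\mathbb{F}_q^{\times}$ makes the $a$-exponent vanish, reduces the sum over $a$ to $q-1$, and leaves a double sum in $(x,y)$ whose shape matches Greene's Corollary 3.14 (recalled in the introduction) with parameters $A=B=\chi_4$, $C=\chi_4^3$, $D=E=\varepsilon$, and $\lambda=1$, so that it collapses to ${_3}F_2\bigl(\chi_4,\chi_4,\chi_4^3;\varepsilon,\varepsilon\mid 1\bigr)$.

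Assembling the contributions, the polynomial-in-$q$ terms from class (i) combine into the $2(q^2-20q+81)$ piece, while the Jacobi sum terms from class (ii) collapse to $2u(-p)^t$ upon invoking the classical evaluation of $J(\chi_4,\chi_4)$ in terms of $u,v$ with $q=u^2+2v^2$, lifted from $\mathbb{F}_{p^2}$ to $\mathbb{F}_{p^{2t}}$ by the Davenport--Hasse relation, which is responsible for the factor $(-p)^t$; the normalization $u\equiv 3\pmod 4$, together with $p\nmid u$ when $p\equiv 3\pmod 8$, pins down the sign. The main obstacle is the bookkeeping across the (at most $3^6=729$) tuples $\mathbf{j}$: identifying the $S_3$-orbits under permutation of $(a,b,c)$, tracking the complex weights $(1\mp i)$, and correctly combining the Jacobi sum pieces into the compact final expression. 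The factor $3$ in front of $q^2\cdot {_3}F_2$ arises from this combined orbit sum, and verifying that no other independent hypergeometric summand survives (for instance one with conjugated parameters) is the most delicate check.
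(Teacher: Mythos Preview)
Your overall framework---vertex-transitivity, the indicator $\mathbb{1}_S(x)=\tfrac14(2\varepsilon(x)+(1-i)\chi_4(x)+(1+i)\chi_4^3(x))$, expansion into character sums $C(\mathbf{j})$, and recognition of Greene's Corollary 3.14---matches the paper's approach closely. However, your account of where the term $2u(-p)^t$ comes from is incorrect, and this is not a cosmetic issue.

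Over $\mathbb{F}_q$ with $q=p^{2t}$, $p\equiv 3\pmod 4$, one has $J(\chi_4,\chi_4)=J(\chi_4,\varphi)=-(-p)^t$ exactly (see the paper's Lemma~\ref{rr}); there is no $u$ in these sums, and no Davenport--Hasse lift from $\mathbb{F}_{p^2}$ is involved. All of your ``class (ii)'' sums that genuinely reduce to products of $J(\chi_4^r,\chi_4^s)$ therefore produce polynomials in $\rho=-(-p)^t$ (hence in $q=\rho^2$), and they are absorbed into the $2(q^2-20q+81)$ piece, not into $2u(-p)^t$. The integer $u$ in $q=u^2+2v^2$ is tied to a character $\chi_8$ of order $8$: it enters as $\mathrm{Re}\,J(\chi_8,\chi_8)$. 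In the paper this appears because several of the surviving ${}_3F_2$ terms---those with tuples such as $(1,1,1,0,0)$, $(3,3,3,0,0)$, $(1,3,3,2,0)$, $(3,1,1,2,0)$---are evaluated explicitly via Greene's Theorem~4.37 as $q^{-2}\cdot(-2u(-p)^t)$ (Lemma~\ref{lem4}). So the $u$ contribution lives inside your ``class (iii)'', not ``class (ii)''.

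Relatedly, your plan to control class (iii) using only the $S_3$ action on $(a,b,c)$ is not enough. After expansion there are $32$ genuinely hypergeometric double sums (the $V_{ij}$ of the paper), and the paper collapses them to five orbit representatives using the group $\mathcal{F}$ generated by Greene's seven ${}_3F_2$ transformation identities (Lemma~\ref{dlemma}); four of those five representatives are then evaluated by Lemma~\ref{lem4}, leaving the single ${}_3F_2(\chi_4,\chi_4,\chi_4^3;\varepsilon,\varepsilon\mid 1)$ with coefficient $3$. Without invoking these transformation identities you will be left with many apparently distinct ${}_3F_2$'s and no mechanism to see that all but one collapse to the closed form $-2u(-p)^t/q^2$. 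Your final sentence correctly flags this as ``the most delicate check'', but as written the proposal contains no tool to carry it out.
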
 
Using Sage, we numerically verify Theorem $\ref{thm2}$ for certain values of $q$. We list some of the values in Table \ref{Table-1}.  We denote by ${_{3}}F_{2}(\cdot)$ the hypergeometric function appearing in Theorem \ref{thm2}.
\begin{table}[ht]
	\begin{center}
	\begin{tabular}{|c |c | c | c | c | c | c|}
			\hline
		$p$	&$q$ & $k_4(P^\ast(q))$ & $u$ & $q^2 \cdot {_{3}}F_{2}(\cdot)$ & $k_4(P^\ast(q))$ &${_{3}}F_{2}(\cdot)$\\
		&& (by Sage) &  & (by Sage) & (by Theorem \ref{thm2}) &\\\hline 
		$3$	&$9$ & $0$ &  $-1$ & $10$ & $0$& $0.1234\ldots$ \\
		$7$	&$49$ & $2156$ &  $7$ & $-30$ & $2156$& $-0.0123\ldots$\\
		$3$	&$81$ & $21060$ & $7$ & $-62$ & $21060$& $-0.0094\ldots$\\
		$11$	&$121$ & $116160$ & $7$ & $42$ & $116160$& $0.0028\ldots$\\
		$19$	&$361$ & $10515930$ &  $-17$ & $522$ & $10515930$& $0.0040\ldots$\\
		$23$	&$529$ & $49135636$ & $23$ & $930$ & $49135636$& $0.0033\ldots$\\
			\hline 			
		\end{tabular}
	\caption{Numerical data for Theorem \ref{thm2}}
\label{Table-1}
\end{center}
\end{table}
\par 
We note that the number of $3$-order cliques in the Peisert graph of order $q$ equals the number of $3$-order cliques in the Paley graph of the same order. 
The computations for the number of cliques of order $4$ are quite tedious, so we further give an asymptotic result in the following theorem, for the number of cliques of order $m$ in Peisert graphs, $m\geq 1$ being an integer.
\begin{theorem}\label{asym}
	Let $p$ be a prime such that $p\equiv 3\pmod 4$. For a positive integer $t$, let $q=p^{2t}$. For $m\geq 1$, let $k_m(P^\ast(q))$ denote the number of cliques of order $m$ in the Peisert graph $P^\ast(q)$. 
	Then $$\lim\limits_{q\to\infty}\dfrac{k_m(P^\ast(q))}{q^m}=\dfrac{1}{2^{{m}\choose_{2}}m!}.$$
\end{theorem}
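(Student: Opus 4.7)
The plan is to count the $m$-cliques of $P^{\ast}(q)$ by a direct character-sum expansion of the edge indicator, isolate a principal term of the stated size, and show that all remaining terms are $o(q^m)$ via the Weil bound for character sums over $\mathbb{F}_q$. Since $P^{\ast}(q)$ is the Cayley graph on $(\mathbb{F}_q,+)$ with connection set $S := \langle g^4\rangle \cup g\langle g^4\rangle$ of size $(q-1)/2$, one has
$$k_m(P^{\ast}(q)) \;=\; \frac{1}{m!} \sum_{(x_1,\ldots,x_m)\in\mathbb{F}_q^m} \prod_{1\leq i<j\leq m} \mathbf{1}_S(x_i - x_j),$$
with the convention $\mathbf{1}_S(0)=0$, so that tuples with a repeated coordinate contribute nothing and the ordered count divided by $m!$ recovers $k_m$.

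Fixing a character $\chi_4$ of order $4$ with $\chi_4(g)=i$, a direct calculation yields
$$\mathbf{1}_S(x) \;=\; \tfrac{1}{2}\varepsilon(x) \;+\; \tfrac{1-i}{4}\chi_4(x) \;+\; \tfrac{1+i}{4}\chi_4^3(x),$$
which is valid on all of $\mathbb{F}_q$ (both sides vanish at $x=0$). Substituting this and interchanging sums gives
$$m!\cdot k_m(P^{\ast}(q)) \;=\; \sum_{\psi} c(\psi)\, S(\psi), \qquad S(\psi) := \sum_{x_1,\ldots,x_m} \prod_{i<j} \psi_{ij}(x_i - x_j),$$
where $\psi$ ranges over labelings of the edges of $K_m$ by elements of $\{\varepsilon,\chi_4,\chi_4^3\}$ and $c(\psi)\in\mathbb{C}$ is built from the three coefficients above. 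The principal labeling $\psi\equiv\varepsilon$ contributes $c(\psi)=2^{-\binom{m}{2}}$ and $S(\psi) = q(q-1)\cdots(q-m+1) = q^m + O(q^{m-1})$, producing the expected main term $q^m/(m!\,2^{\binom{m}{2}}) + O(q^{m-1})$.

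For the finitely many non-principal $\psi$, the strategy is to exploit translation invariance to fix $x_m = 0$ (gaining a factor of $q$), then identify a coordinate $x_j$ incident to at least one edge with nontrivial label and evaluate the resulting inner sum over $x_j$ via Weil. That inner sum takes the shape $\sum_{y\in\mathbb{F}_q}\chi_4(f(y))$ for a polynomial $f\in\mathbb{F}_q[y]$ whose exponents are prescribed by the labels at $x_j$; whenever $f$ is not a fourth power in $\overline{\mathbb{F}_q}[y]$, Weil's theorem yields the bound $(\deg f - 1)\sqrt{q}$. Bounding the remaining $m-2$ summations trivially then gives $|S(\psi)| = O(q^{m-1/2})$, and since the number of labelings depends only on $m$, one obtains
$$k_m(P^\ast(q)) \;=\; \frac{q^m}{m!\,2^{\binom{m}{2}}} \;+\; O\bigl(q^{m-1/2}\bigr),$$
from which the claimed limit follows.

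The main obstacle is ensuring, at each application of Weil's theorem, that the inner polynomial $f$ is genuinely not a fourth power. Coincidences among the outer variables (when two of the $x_i$ happen to be equal) can collapse exponents modulo $4$ at a common root, so one must either isolate the diagonal loci $\{x_i = x_j\}$ by a short inclusion--exclusion before peeling, or order the elimination so that the first peeled variable is incident to an edge carrying a single nontrivial label (which forces the relevant exponent to remain non-zero modulo $4$ regardless of the other values). Once this combinatorial setup is arranged, the Weil bound is applied routinely and the asymptotic, hence the theorem, follows.
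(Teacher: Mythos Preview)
Your approach is sound and yields the theorem, in fact with an explicit $O(q^{m-1/2})$ error term. It differs from the paper's proof, which proceeds by induction on $m$: one writes $k_m$ as $\frac{1}{m}$ times a sum over ordered $(m-1)$-tuples, weighted by the $(m{-}1)$-clique indicator, of the inner sum $\sum_{a_m\ne a_i}\prod_{i=1}^{m-1}\bigl(2+h\chi_4(a_m-a_i)+\overline{h}\,\chi_4^3(a_m-a_i)\bigr)$, expands only that inner product, bounds each of the $3^{m-1}-1$ nontrivial terms by Weil, and then squeezes against the inductive value of $k_{m-1}/q^{m-1}$. The inductive setup has one structural advantage over yours: since the outer points $a_1<\cdots<a_{m-1}$ are automatically distinct, the polynomial $g(a_m)=\prod_k(a_m-a_{i_k})^{j_k}$ handed to Weil always has distinct roots with exponents in $\{1,3\}$ and is therefore never of the form $c\cdot h(y)^4$ --- the fourth-power obstruction you flag simply cannot arise.

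In your direct expansion that obstruction is genuine, and your fix (b) does not work in general: if the subgraph of nontrivially labelled edges has minimum degree at least $2$ (for instance a cycle on $\{1,\ldots,m\}$), no vertex is incident to exactly one such edge, so there is no ``safe'' first peel. Fix (a), however, is easy to carry out: after setting $x_m=0$ and choosing any $j$ incident to a nontrivial edge, the outer tuples $(x_k)_{k\ne j,m}$ for which the inner polynomial $\prod_{k\in T}(y-x_k)^{a_k}$ becomes a perfect fourth power lie inside a finite union of coincidence hyperplanes of total size $O(q^{m-3})$; bounding the inner sum trivially by $q$ on that locus contributes only $O(q^{m-1})$ to $S(\psi)$, while on its complement Weil gives the uniform bound $O(\sqrt{q})$ and hence $|S(\psi)|=O(q^{m-1/2})$ as you claim. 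With that adjustment the argument is complete.
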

\section{preliminaries and some lemmas}
We begin by fixing some notations. For a prime $p\equiv 3\pmod{4}$ and positive integer $t$, let $q=p^{2t}$. Let $g$ be a primitive element of the finite field $\mathbb{F}_q$, that is, $\mathbb{F}_q^\ast=\mathbb{F}_q\setminus\{0\}=\langle g\rangle$. 
Now, we fix a multiplicative character $\chi_4$ on $\mathbb{F}_q$ of order $4$ (which exists since $q\equiv 1\pmod 4$). Let $\varphi$ be the unique quadratic character on $\mathbb{F}_q$. Then, we have $\chi_4^2=\varphi$. 
Let $H=\langle g^4\rangle\cup g\langle g^4\rangle$. Since $H$ is the union of two cosets of $\langle g^4\rangle $ in $\langle g\rangle $, we see that $|H|=2\times \frac{q-1}{4}=\frac{q-1}{2}$. 
We recall that a vertex-transitive graph is a graph in which, given any two vertices in the graph, there exists some graph automorphism sending one of the vertices to the other. Peisert graphs being symmetric, are vertex-transitive. Also,
the subgraphs induced by $\langle g^4\rangle$ and $g\langle g^4\rangle$ are both vertex transitive: if $s,t$ are two elements of $\langle g^4\rangle$ (or $g\langle g^4\rangle$) then the map on the vertex set of $\langle g^4\rangle$ (or $g\langle g^4\rangle$) 
given by $x\longmapsto \frac{t}{s} x$ is an isomorphism sending $s$ to $t$. 
The subgraph of $P^\ast(q)$ induced by $H$ is denoted by $\langle H\rangle$.
\par 
Throughout the article, we fix $h=1-\chi_4(g)$. For $x\in\mathbb{F}_q^\ast$, we have the following:
\begin{align}\label{qq}
\frac{2+h\chi_4(x)+\overline{h}\overline{\chi_4}(x)}{4} = \left\{
\begin{array}{lll}
1, & \hbox{if $\chi_4(x)\in\{1,\chi_4(g)\}$;} \\
0, & \hbox{\text{otherwise.}}
\end{array}
\right.
\end{align}
We note here that for $x\neq 0$, $x\in H$ if and only if $\chi_4(x)=1$ or $\chi_4(x)=\chi_4(g)$.
\par 
We have the following lemma which will be used in proving the main results.
\begin{lemma}\label{rr}
	Let $q=p^{2t}$ where $p\equiv 3\pmod 4$ is a prime and $t$ is a positive integer. Let $\chi_4$ be a multiplicative character of order $4$ on $\mathbb{F}_q$, and let $\varphi$ be the unique quadratic character. Then, we have $J(\chi_4,\chi_4)=J(\chi_4,\varphi)=-(-p)^t$.
\end{lemma}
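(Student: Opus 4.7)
The plan is to pass from Jacobi sums to Gauss sums and then invoke the Hasse--Davenport lifting relation to reduce to Gauss sums on the base fields $\mathbb{F}_p$ and $\mathbb{F}_{p^2}$. Write $\tau(\chi)$ for the Gauss sum of $\chi$ with respect to the canonical additive character of $\mathbb{F}_q$. Since $\chi_4$, $\varphi=\chi_4^2$ and $\chi_4\varphi=\chi_4^3$ are all nontrivial, the classical identity $J(A,B)=\tau(A)\tau(B)/\tau(AB)$ gives
\begin{align*}
J(\chi_4,\chi_4)=\frac{\tau(\chi_4)^2}{\tau(\varphi)}, \qquad J(\chi_4,\varphi)=\frac{\tau(\chi_4)\tau(\varphi)}{\tau(\chi_4^3)}.
\end{align*}
Because $q\equiv 1\pmod 8$ one checks $\chi_4(-1)=1$, so $\tau(\chi_4^3)=\tau(\overline{\chi_4})=q/\tau(\chi_4)$, and $J(\chi_4,\varphi)$ simplifies to $\tau(\chi_4)^2\,\tau(\varphi)/q$. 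It therefore suffices to compute $\tau(\varphi)$ and $\tau(\chi_4)$.

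For $\tau(\varphi)$, the (unique) quadratic character on $\mathbb{F}_q^{\times}$ is the norm lift of the quadratic character $\varphi_p$ on $\mathbb{F}_p^{\times}$; since $p\equiv 3\pmod 4$ one has $\tau(\varphi_p)^2=\varphi_p(-1)\,p=-p$, and the Hasse--Davenport relation $-\tau(\varphi)=(-\tau(\varphi_p))^{2t}=(\tau(\varphi_p)^2)^t=(-p)^t$ yields $\tau(\varphi)=-(-p)^t$. For $\tau(\chi_4)$, $\mathbb{F}_p^{\times}$ admits no character of order $4$, so I would instead write $\chi_4=\widetilde{\chi}_4\circ N_{\mathbb{F}_q/\mathbb{F}_{p^2}}$ for some order-$4$ character $\widetilde{\chi}_4$ on $\mathbb{F}_{p^2}^{\times}$ and apply Hasse--Davenport from the intermediate field to get $-\tau(\chi_4)=(-\tau(\widetilde{\chi}_4))^t$. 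The problem then reduces to evaluating $\tau(\widetilde{\chi}_4)$.

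The sign determination for $\tau(\widetilde{\chi}_4)$ is the main obstacle. Since $p\equiv 3\pmod 4$ forces $i^p=-i$, Frobenius acts on $\widetilde{\chi}_4$ by $\widetilde{\chi}_4^{p}=\overline{\widetilde{\chi}_4}$; together with Frobenius invariance of the trace (hence of the canonical additive character) and $\widetilde{\chi}_4(-1)=1$, this forces $\tau(\widetilde{\chi}_4)\in\mathbb{R}$, and combined with the norm identity $|\tau(\widetilde{\chi}_4)|^2=p^2$ one obtains $\tau(\widetilde{\chi}_4)=\pm p$. I would pin down the negative sign via Stickelberger's congruence, or concretely by a direct evaluation on $\mathbb{F}_9$ that yields $\tau(\widetilde{\chi}_4)=-3$ at $p=3$ together with a uniformity argument (alternatively one may cite the classical formula for quartic Gauss sums over $\mathbb{F}_{p^2}$ from Berndt--Evans--Williams). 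Granting $\tau(\widetilde{\chi}_4)=-p$, Hasse--Davenport gives $\tau(\chi_4)=-p^t$, and substituting everything back produces
\begin{align*}
J(\chi_4,\chi_4)=\frac{p^{2t}}{-(-p)^t}=-(-p)^t, \qquad J(\chi_4,\varphi)=\frac{p^{2t}\cdot\bigl(-(-p)^t\bigr)}{p^{2t}}=-(-p)^t,
\end{align*}
as asserted.
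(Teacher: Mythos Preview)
Your argument is correct and takes a genuinely different route from the paper. The paper simply cites Katre--Rajwade for $J(\chi_4,\chi_4)=-(-p)^t$, then uses the classical identity $J(\chi_4,\varphi)=\chi_4(4)J(\chi_4,\chi_4)$ together with the standard congruence conditions on the real part of $J(\chi_4,\varphi)$ (from Berndt--Evans--Williams) to conclude. Your approach instead passes to Gauss sums and computes $\tau(\varphi)$ and $\tau(\chi_4)^2$ via Hasse--Davenport, which is more self-contained in that it avoids the specialised Jacobi-sum references, at the cost of invoking the lifting relation.

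One simplification worth noting: the ``main obstacle'' you identify --- the sign of $\tau(\widetilde{\chi}_4)$ --- is not actually needed. Both of your target expressions involve only $\tau(\chi_4)^2$, and the Frobenius argument you give over $\mathbb{F}_{p^2}$ works verbatim over $\mathbb{F}_q$ itself: since $p\equiv -1\pmod 4$ one has $\chi_4^{\,p}=\overline{\chi_4}$, whence $\tau(\chi_4)=\tau(\overline{\chi_4})=\overline{\tau(\chi_4)}$ is real, and combined with $|\tau(\chi_4)|^2=q$ this gives $\tau(\chi_4)^2=p^{2t}$ directly. So the Hasse--Davenport step for $\chi_4$, the intermediate field $\mathbb{F}_{p^2}$, and the appeal to Stickelberger or a ``uniformity argument'' can all be dropped; only the Hasse--Davenport computation of $\tau(\varphi)=-(-p)^t$ is doing real work. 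This also sidesteps the one soft spot in your write-up, namely that a single evaluation at $p=3$ together with an unspecified uniformity principle would not by itself determine the sign for general $p$.
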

\begin{proof}
	By \cite[Proposition 1]{katre}, we have $J(\chi_4,\chi_4)=-(-p)^t$. We also note that by Theorem 2.1.4 and Theorem 3.2.1 of \cite{berndt}, where the results remain the same if we replace a prime by a prime power, 
	we see that $J(\chi_4,\varphi)=\chi_4(4)J(\chi_4,\chi_4)=a_4+ib_4$, where $a_4^2+b_4^2=q$ and $a_4\equiv -(\frac{q+1}{2})\pmod 4$. Hence, $a_4\equiv 1\pmod 4$ and $a_4=-(-p)^t,b_4=0$. Thus, we obtain $J(\chi_4,\varphi)=J(\chi_4,\chi_4)=-(-p)^t$.	
\end{proof}  
Next, we evaluate certain character sums in the following lemmas.
\begin{lemma}\label{lem1}
	Let $q\equiv 1\pmod 4$ be a prime power and let $\chi_4$ be a character on $\mathbb{F}_q$ of order $4$ such that $\chi_4(-1)=1$, and let $\varphi$ be the unique quadratic character. Let $a\in\mathbb{F}_q$ be such that $a\neq0,1$. 
	Then, $$\sum_{y\in\mathbb{F}_q}\chi_4((y-1)(y-a))=\varphi(a-1)J(\chi_4,\chi_4).$$
\end{lemma}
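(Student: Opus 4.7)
The plan is to reduce the sum to a standard Jacobi sum by a linear change of variable that collapses the two linear factors into the canonical shape $z(1-z)$. Observe that since $a \neq 1$, the map $y = 1 + (a-1)z$ is a bijection on $\mathbb{F}_q$; under this substitution we have $y-1 = (a-1)z$ and $y-a = (a-1)(z-1)$, so
\begin{equation*}
(y-1)(y-a) = (a-1)^2\, z(z-1).
\end{equation*}
Applying $\chi_4$ and using that $\chi_4^2 = \varphi$, this yields
\begin{equation*}
\sum_{y \in \mathbb{F}_q} \chi_4((y-1)(y-a)) = \varphi(a-1)\sum_{z \in \mathbb{F}_q} \chi_4(z)\chi_4(z-1).
\end{equation*}

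Next I would exploit the hypothesis $\chi_4(-1)=1$ to rewrite $\chi_4(z-1) = \chi_4(-1)\chi_4(1-z) = \chi_4(1-z)$. The inner sum then becomes $\sum_{z} \chi_4(z)\chi_4(1-z)$, which by definition is the Jacobi sum $J(\chi_4,\chi_4)$ (note the terms at $z=0$ and $z=1$ contribute zero since $\chi_4(0)=0$). Substituting back gives exactly the claimed identity
\begin{equation*}
\sum_{y\in\mathbb{F}_q} \chi_4((y-1)(y-a)) = \varphi(a-1) J(\chi_4,\chi_4).
\end{equation*}

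There is essentially no serious obstacle here; the only subtlety is making sure the affine substitution is legitimate (which it is, since $a\neq 1$ means $a-1\in\mathbb{F}_q^\ast$) and that the sign can be absorbed via $\chi_4(-1)=1$. This latter condition is crucial and is the reason for the hypothesis in the statement; in the setting of the paper it is automatic because $q = p^{2t}$ with $p\equiv 3\pmod 4$ forces $q\equiv 1\pmod 8$, so $-1$ is a fourth power in $\mathbb{F}_q^\ast$.
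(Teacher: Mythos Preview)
Your proof is correct and follows essentially the same approach as the paper: both arguments reduce to $J(\chi_4,\chi_4)$ via an affine change of variable and then absorb the sign using $\chi_4(-1)=1$. The only cosmetic difference is that the paper performs the shift $y'=y-1$, the rescaling $y''=y'(1-a)^{-1}$, and the negation $y''\mapsto -y''$ as three separate steps, whereas you combine them into the single substitution $y=1+(a-1)z$; the resulting factors $\varphi(1-a)$ versus $\varphi(a-1)$ agree since $q\equiv 1\pmod 4$ forces $\varphi(-1)=1$.
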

\begin{proof} We have 
	\begin{align*}
	&\sum_{y\in\mathbb{F}_q}\chi_4((y-1)(y-a))=\sum_{y'\in\mathbb{F}_q}\chi_4(y'(y'+1-a))\\
&=\sum_{y''\in\mathbb{F}_q}\chi_4((1-a)y'')\chi_4((1-a)(y''+1))
=\varphi(1-a)\sum_{y''\in\mathbb{F}_q}\chi_4(y''(y''+1))\\
	&=\varphi(1-a)\sum_{y''\in\mathbb{F}_q}\chi_4(-y''(-y''+1))\\
	&=\varphi(1-a)J(\chi_4,\chi_4),
	\end{align*}
	where we used the substitutions $y-1=y'$, $y''=y'(1-a)^{-1}$, and replaced $y''$ by $-y''$.
\end{proof}
\begin{lemma}\label{lem2}
	Let $q\equiv 1\pmod 4$ be a prime power and let $\chi_4$ be a character on $\mathbb{F}_q$ of order $4$ such that $\chi_4(-1)=1$. Let $a\in\mathbb{F}_q$ be such that $a\neq0,1$. Then, $$\sum_{y\in\mathbb{F}_q}\chi_4(y)\overline{\chi_4}(a-y)=-1.$$
\end{lemma}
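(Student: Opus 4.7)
The plan is a one-line reduction to a classical Jacobi-sum evaluation, so I would begin with a change of variables. Since $\chi_4(0)=0$, the $y=0$ and $y=a$ terms vanish, and because $a\neq 0$, the substitution $y=at$ is a bijection of $\mathbb{F}_q$ to itself. Under it, $\chi_4(y)\overline{\chi_4}(a-y)$ becomes $\chi_4(a)\overline{\chi_4}(a)\,\chi_4(t)\overline{\chi_4}(1-t)=\chi_4(t)\overline{\chi_4}(1-t)$, using $\chi_4(a)\overline{\chi_4}(a)=1$. Summing over $t\in\mathbb{F}_q$ identifies the original sum as the Jacobi sum $J(\chi_4,\overline{\chi_4})$.

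Next I would invoke the standard identity $J(\chi,\overline{\chi})=-\chi(-1)$, valid for any nontrivial character $\chi$ on $\mathbb{F}_q^\times$. A one-line derivation (if desired) uses the substitution $s=t/(1-t)$ to convert $\sum_t \chi(t)\overline{\chi}(1-t)$ into $\sum_{s\neq 0,\,-1}\chi(s)$, which equals $-\chi(-1)$ by orthogonality of characters. Since $\chi_4$ has order $4$ it is nontrivial, and the hypotheses $p\equiv 3\pmod 4$ together with $q=p^{2t}$ force $q\equiv 1\pmod 8$, hence $\chi_4(-1)=1$; this was already observed in the introduction as the reason adjacency in $P^\ast(q)$ is symmetric. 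Therefore $J(\chi_4,\overline{\chi_4})=-\chi_4(-1)=-1$, which is the claim.

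There is no genuine obstacle here: the proof is a single linear change of variables followed by a textbook Jacobi-sum evaluation. I would also remark that the hypothesis $a\neq 1$ plays no role in the argument, so the stated identity in fact holds for every $a\in\mathbb{F}_q^\times$; the assumption is presumably retained only because the lemma is applied in that regime in subsequent sections.
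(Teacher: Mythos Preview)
Your proof is correct and follows essentially the same route as the paper: both first substitute $y=at$ (the paper writes $y'=ya^{-1}$) to reduce to $\sum_t \chi_4(t)\overline{\chi_4}(1-t)$, and then both use the substitution $s=t/(1-t)$ to turn this into $\sum_{s\neq -1}\chi_4(s)=-\chi_4(-1)=-1$. One small remark: $\chi_4(-1)=1$ is already an explicit hypothesis of the lemma, so your digression about $p\equiv 3\pmod 4$ and $q=p^{2t}$ is unnecessary here (the lemma is stated for general $q\equiv 1\pmod 4$), and your observation that $a\neq 1$ is not actually used is accurate.
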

\begin{proof} We have 
\begin{align*}
&\sum_{y\in\mathbb{F}_q}\chi_4(y)\overline{\chi_4}(a-y)=\sum_{y'\in\mathbb{F}_q}\chi_4(ay')\overline{\chi_4}(a-ay')\\
&=\sum_{y'\in\mathbb{F}_q}\chi_4(y')\overline{\chi_4}(1-y')\\
&=\sum_{y'\in\mathbb{F}_q}\chi_4\left(y'(1-y')^{-1}\right)\\
&=\sum_{y''\in\mathbb{F}_q, y''\neq -1}\chi_4(y'') =-1,
\end{align*}
where we used the substitutions $y' =y a^{-1}$ and $y'' =y'(1-y')^{-1}$, respectively.
\end{proof}
\begin{lemma}\label{lem3}
	Let $q=p^{2t}$, where $p\equiv 3\pmod 4$ is a prime and $t$ is a positive integer. Let $\chi_4$ be a character on $\mathbb{F}_q$ of order $4$ and let $\varphi$ be the unique quadratic character. 
	Let $J(\chi_4,\chi_4)=J(\chi_4,\varphi)=\rho$, where $\rho=-(-p)^t$. Then,
	\begin{align}\label{koro}
	\sum\limits_{x,y\in\mathbb{F}_q, x\neq 1}\overline{\chi_4}(x)\chi_4(y)\chi_4(1-y)\chi_4(x-y)=-2\rho	
	\end{align} and
	\begin{align}\label{koro1}
	\sum\limits_{x,y\in\mathbb{F}_q, x\neq 1}\overline{\chi_4}(x)\chi_4(y)\chi_4(1-y)\overline{\chi_4}(x-y)=1-\rho.
	\end{align}
\end{lemma}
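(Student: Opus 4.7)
The plan is to treat both identities by the same two-step reduction. For each fixed $y$ in the outer sum I would perform the multiplicative change of variables $x = yt$ in the inner sum over $x$. Since $\chi_4(0) = \overline{\chi_4}(0) = 0$, the effective range of summation is $y \neq 0, 1$ and $x \neq 0, 1, y$, which under the substitution becomes $t \neq 0, 1, 1/y$. The exclusion $t = 1/y$ is the translated form of the explicit restriction $x \neq 1$ and has to be extracted as an explicit correction term.

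After the substitution, the $y$-dependence in the inner sum collapses: the factor $\overline{\chi_4}(y)\chi_4(y) = 1$ in the first identity and $\overline{\chi_4}(y)^2 = \varphi(y)$ in the second, leaving a sum over $t$ that is a Jacobi sum minus one term. Using $\chi_4(-1) = 1$ (which holds since $q \equiv 1 \pmod 8$) together with the standard identity $J(A, \overline{A}) = -A(-1)$ for nontrivial $A$, the sum $\sum_t \overline{\chi_4}(t)\chi_4(t-1)$ evaluates to $-1$, while $\sum_t \overline{\chi_4}(t)\overline{\chi_4}(t-1) = \overline{J(\chi_4,\chi_4)} = \rho$, using Lemma \ref{rr} together with the fact that $\rho = -(-p)^t$ is a real integer. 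A brief manipulation shows that the extracted correction term at $t = 1/y$ is $\chi_4(1-y)$ in the first case and $\varphi(y)\overline{\chi_4}(1-y)$ in the second.

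Substituting back into the outer sum, the first identity splits cleanly as $-J(\chi_4,\chi_4) - J(\chi_4,\varphi) = -2\rho$ by Lemma \ref{rr}. For the second identity, using $\chi_4\varphi = \overline{\chi_4}$ and $\chi_4(1-y)\overline{\chi_4}(1-y) = 1$, the outer sum becomes $\rho J(\overline{\chi_4},\chi_4) - \sum_{y \neq 0, 1} \chi_4(y)$; the first piece is $-\rho$ and the second is $-(-1) = 1$, since $\sum_{y \in \mathbb{F}_q^\ast} \chi_4(y) = 0$ forces $\sum_{y \neq 0, 1} \chi_4(y) = -1$. The step I expect to be most delicate is the bookkeeping of the excluded term $t = 1/y$: it is the sole source of the asymmetry between the two final answers, and small slips in signs or in whether one uses $\chi_4$ or $\overline{\chi_4}$ could easily collapse the difference between $-2\rho$ and $1 - \rho$. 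Once this correction term is handled, both identities reduce to the Jacobi sum evaluations already provided by Lemma \ref{rr}.
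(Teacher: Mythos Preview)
Your proposal is correct and follows essentially the same approach as the paper: you reduce the inner sum over $x$ via the substitution $x = yt$ (which is precisely the paper's $x' = xy^{-1}$), extract the $x=1$ term as a correction, and then recognize the remaining pieces as Jacobi sums handled by Lemma~\ref{rr}. The only cosmetic difference is that the paper quotes Lemma~\ref{lem2} directly for the first identity instead of redoing the substitution, while you treat both identities uniformly; the underlying computation is identical.
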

\begin{proof}
	By Lemma \ref{lem2}, we have
	\begin{align*}
	\sum_{y\neq 0,1}\chi_4(y)\chi_4(1-y)\sum_{x\neq 0,1,y}\overline{\chi_4}(x)\chi_4(x-y)
	&=\sum_{y\neq 0,1}\chi_4(y)\chi_4(1-y)\left[-1-\chi_4(y-1) \right]\\
	&=-\rho-\sum_y \chi_4(y)\varphi(1-y)=-2\rho,
	\end{align*}
which proves \eqref{koro}. Next, using the substitution $x'=xy^{-1}$, we have
\begin{align}\label{sum-new}
\sum_x \overline{\chi_4}(x)\overline{\chi_4}(x-y)&=\sum_{x'} \overline{\chi_4}(x'y)\overline{\chi_4}(x'y-y)\notag \\
&=\varphi(y)\rho.	
\end{align}
So, using \eqref{sum-new}, we find that
	\begin{align*}
	&\sum_{y\neq 0,1}\chi_4(y)\chi_4(1-y)\sum_{x\neq 0,1,y}\overline{\chi_4}(x)\overline{\chi_4}(x-y)\\
	&=\sum_{y\neq 0,1}\chi_4(y)\chi_4(1-y)\left[\varphi(y)\rho-\overline{\chi_4}(y-1) \right]\\
	&=\rho\sum_y \overline{\chi_4}(y)\chi_4(1-y)-\sum_y \chi_4(y)\\
	&=-\rho+1.
	\end{align*}
	This completes the proof of the lemma.
\end{proof}
We need to evaluate several analogous character sums as in Lemma \ref{lem3}. To this end, we have the following two lemmas whose proofs merely involve Lemmas \ref{lem1} and \ref{lem2} (as in Lemma \ref{lem3}).
\begin{lemma}\label{lema1}
	Let $q=p^{2t}$, where $p\equiv 3\pmod 4$ is a prime and $t$ is a positive integer. Let $\chi_4$ be a character on $\mathbb{F}_q$ of order $4$ and let $\varphi$ be the unique quadratic character. Let $J(\chi_4,\chi_4)=J(\chi_4,\varphi)=\rho$, where $\rho=-(-p)^t$. 
	Then, we have 
	\begin{align*}
	&\sum\limits_{x,y\in\mathbb{F}_q, x\neq 1} \chi_4^{i_1}(y)\chi_4^{i_2}(1-y)\chi_4^{i_3}(x-y)\\
	&=\left\{
	\begin{array}{lll}
	-2\rho, & \hbox{if $(i_1, i_2, i_3)\in \{(1, 1, 1), (-1, -1, -1)\};$} \\
	2, & \hbox{if $(i_1, i_2, i_3)\in \{(1, 1, -1), (-1, -1, 1)\};$} \\
	1-\rho, & \hbox{if $(i_1, i_2, i_3)\in \{(1, -1, 1), (1, -1, -1), (-1, 1, 1), (-1, 1, -1)\}$.}
	\end{array}
	\right.
	\end{align*}	
\end{lemma}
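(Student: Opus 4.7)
The plan is to follow the strategy of Lemma~\ref{lem3} for each of the eight triples $(i_1, i_2, i_3) \in \{1, -1\}^3$: collapse the inner sum over $x$ by character orthogonality, and then evaluate the resulting outer sum over $y$ via Lemmas~\ref{lem1}, \ref{lem2}, and~\ref{rr}.

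Since $\chi_4^{i_1}(y)$ and $\chi_4^{i_2}(1 - y)$ vanish at $y = 0$ and $y = 1$ respectively, the effective outer range is $y \in \mathbb{F}_q \setminus \{0, 1\}$. For such a $y$, the orthogonality identity $\sum_{x \in \mathbb{F}_q} \chi_4^{i_3}(x - y) = 0$ reduces the inner sum, after removing the excluded values of $x$ and invoking $\chi_4(-1) = 1$ (valid because $q \equiv 1 \pmod 8$), to a two-term expression in $\chi_4^{i_3}(y)$ and $\chi_4^{i_3}(1-y)$. Substituting this back writes the original sum as $-T_1 - T_2$, where
\begin{align*}
T_1 = \sum_{y} \chi_4^{i_1+i_3}(y)\,\chi_4^{i_2}(1-y), \qquad T_2 = \sum_{y} \chi_4^{i_1}(y)\,\chi_4^{i_2+i_3}(1-y).
\end{align*}

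Each exponent sum $i_1 + i_3$ and $i_2 + i_3$ lies in $\{-2, 0, 2\}$. When it equals $\pm 2$, the corresponding character becomes $\varphi = \chi_4^{2}$ and the Jacobi sum evaluates to $\rho$ by Lemma~\ref{rr}, with the reality of $\rho$ covering both signs via complex conjugation. When it equals $0$, the character becomes the trivial $\varepsilon$ and the sum collapses to $\sum_{y \neq 0, 1} \chi_4^{\pm 1}(y) = -1$. Staying strictly within the toolkit of the paper, Lemma~\ref{lem1} reformulates the Jacobi evaluations $J(\chi_4, \varphi) = \rho$, and Lemma~\ref{lem2} yields the character-sum identity that encodes $J(\chi_4, \overline{\chi_4}) = -1$. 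Either route therefore shows that $T_i = \rho$ whenever the relevant exponent sum is nonzero, and $T_i = -1$ otherwise.

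Assembling the contributions, the total equals $-2\rho$ exactly when $i_1 = i_2 = i_3$ (both exponent sums nonzero), equals $2$ exactly when $i_1 = i_2 = -i_3$ (both exponent sums zero), and equals $1-\rho$ in the remaining four cases with $i_1 \neq i_2$. The eight cases pair up under $(i_1,i_2,i_3) \mapsto (-i_1,-i_2,-i_3)$ by complex conjugation combined with the reality of $\rho$, so only four distinct calculations are actually required. The main obstacle is not conceptual but purely organizational: one must carefully track, in each case, how the pair $(i_1+i_3,\, i_2+i_3)$ dictates the specialization of each $T_i$ to $\rho$ or to $-1$, and apply $\chi_4(-1)=1$ consistently to reduce $\chi_4^{i_3}(-y)$ to $\chi_4^{i_3}(y)$ throughout.
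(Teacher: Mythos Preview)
Your proof is correct and follows exactly the route the paper indicates: the paper does not spell out this lemma but simply points to the method of Lemma~\ref{lem3} via Lemmas~\ref{lem1} and~\ref{lem2}, which is precisely your ``collapse the inner $x$-sum, then evaluate Jacobi sums'' strategy.

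One clarification is worth making explicit. The displayed summation condition reads only $x\neq 1$, yet your ``two-term expression in $\chi_4^{i_3}(y)$ and $\chi_4^{i_3}(1-y)$'' comes from excluding both $x=0$ and $x=1$. With the literal condition $x\neq 1$ alone, the inner sum would be just $-\chi_4^{i_3}(1-y)$, giving $-T_2$ rather than $-T_1-T_2$; for instance, the case $(1,1,1)$ would then yield $-\rho$ instead of the claimed $-2\rho$. The intended range is indeed $x\neq 0,1$, as confirmed by the paper's later use of this lemma for the sums $T_j$, which are defined with $x\neq 0,1$ explicitly. You have computed the right thing, but you should state the exclusion $x\neq 0$ overtly rather than hiding it inside the phrase ``the excluded values of $x$''.
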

\begin{lemma}\label{corr}
	Let $q=p^{2t}$, where $p\equiv 3\pmod 4$ is a prime and $t$ is a positive integer. Let $\chi_4$ be a character on $\mathbb{F}_q$ of order $4$ and let $\varphi$ be the unique quadratic character. Let $J(\chi_4,\chi_4)=J(\chi_4,\varphi)=\rho$, where $\rho=-(-p)^t$. 
	Then, for $i_1,i_2,i_3\in\{\pm 1\}$, we have the following tabulation of the values of the expression given below:
	\begin{align}\label{new-eqn1}
	\sum\limits_{x,y\in\mathbb{F}_q, x\neq 1}A_x \cdot \chi_4^{i_1}(y)\chi_4^{i_2}(1-y)\chi_4^{i_3}(x-y).
	\end{align}
	For $w\in\{1,2,\ldots,8\}$ and $z\in \{1,2,\ldots,7\}$, the $(w,z)$-th entry in the table corresponds to \eqref{new-eqn1},
	where $A_x$ is either $\chi_4(x),\overline{\chi_4}(x),\chi_4(1-x)$ or $\overline{\chi_4}(1-x)$ and the tuple $(i_1,i_2,i_3)$ depends on $w$.
	\begin{align*}
	\begin{array}{|l|l|l|l|l|l|l|}
	\cline {4 - 7 } \multicolumn{3}{c|}{} & \multicolumn{4}{|c|}{A_{x}} \\
	\hline i_{1} & i_{2} & i_{3} & \chi_4(x) & \overline{\chi_4}(x) & \chi_4(1-x) & \overline{\chi_4}(1-x) \\
	\hline
	1 & 1 & 1  & -2 \rho & -2 \rho & -2 \rho & -2 \rho \\
	1 & 1 & -1  & 1-\rho & 1-\rho & 1- \rho & 1-\rho \\
	1 & -1 & 1 & {\rho}^2+1 & 2 & {\rho}^2-\rho & 1-\rho \\
	1 & -1 & -1 & 1-\rho & {\rho}^2-\rho & 2 & {\rho}^2+1 \\
	-1 & 1 & 1 &{\rho}^2-\rho  & 1-\rho &{\rho}^2+1  & 2 \\
	-1 & 1 & -1 &2  & {\rho}^2+1 &1-\rho  & {\rho}^2-\rho \\
	-1 & -1 & 1 &1-\rho  & 1-\rho &1-\rho  & 1-\rho \\
	-1 & -1 & -1 &-2\rho  & -2\rho &-2\rho  & -2\rho\\
	\hline 
	\end{array}	
	\end{align*}
	For example, the $(3,6)$-th position contains the value ${\rho}^2-\rho$. Here $w=3$ corresponds to $i_1=1,i_2=-1,i_3=1$; $z=6$ corresponds to the column $A_x=\chi_4(1-x)$. 
	So,  $$\sum\limits_{x,y\in\mathbb{F}_q, x\neq 1}\chi_4(1-x)\chi_4(y) \overline{\chi_4}(1-y)\chi_4(x-y)={\rho}^2-\rho.$$
\end{lemma}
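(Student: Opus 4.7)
The plan is to apply the technique of Lemma \ref{lem3} uniformly to each of the 32 cells of the table. For each entry, I would fix $y$ and first evaluate the inner sum $S(y) = \sum_{x \neq 1} A_x \cdot \chi_4^{i_3}(x-y)$, then reduce the outer sum $\sum_y \chi_4^{i_1}(y)\chi_4^{i_2}(1-y)\, S(y)$ using Lemma \ref{lem1}, the Jacobi sum identity $J(\chi_4, \chi_4) = J(\chi_4, \varphi) = \rho$ from Lemma \ref{rr}, and character orthogonality.

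Concretely, the inner sum is handled by one of two moves. When the character on $x$ appearing in $A_x$ and the character $\chi_4^{i_3}(x-y)$ have exponents of opposite sign, Lemma \ref{lem2} applies directly, after accounting for the $x=1$ boundary term that has been removed from the sum. When the exponents have the same sign, the substitution $x \mapsto x'y$ (when $A_x$ depends on $x$) or $1-x \mapsto x'(1-y)$ (when $A_x$ depends on $1-x$) pulls out a factor $\varphi(y)$ or $\varphi(1-y)$ and leaves behind the Jacobi sum $\rho$, exactly mirroring \eqref{sum-new}. In every case $S(y)$ reduces to a short integer linear combination of $1$, $\chi_4^{\pm 1}(y)$, $\chi_4^{\pm 1}(1-y)$, $\varphi(y)\rho$, and $\varphi(1-y)\rho$. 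The outer sum then collapses to combinations of $\sum_y \chi_4^a(y)\chi_4^b(1-y)$ for small exponents $a,b$, each of which is either zero by orthogonality, equal to $-1$ via Lemma \ref{lem2}, or equal to $\pm\rho$ via Lemma \ref{rr} (using the identities $\chi_4^3 = \overline{\chi_4}$ and $\chi_4^2 = \varphi$). Assembling the pieces produces the tabulated values.

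The main obstacle is not the individual calculations but the bookkeeping of 32 cases. To manage this I would exploit two structural observations. First, the simultaneous complex conjugation $(i_1,i_2,i_3) \mapsto (-i_1,-i_2,-i_3)$ together with $A_x \mapsto \overline{A_x}$ leaves the sum invariant, because $\rho \in \mathbb{R}$ forces every entry to be real; this pairs rows $1 \leftrightarrow 8$, $2 \leftrightarrow 7$, $3 \leftrightarrow 6$, $4 \leftrightarrow 5$ with the $\chi_4$- and $\overline{\chi_4}$-columns swapped, immediately halving the work. Second, Lemma \ref{lem3} already handles the prototype cases $(1,1,\pm 1)$ with $A_x = \overline{\chi_4}(x)$, supplying a template that transfers to the remaining entries with only minor modifications. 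Once these observations are in place, the verification reduces to roughly eight essentially distinct computations, each a direct analogue of the proof of Lemma \ref{lem3}.
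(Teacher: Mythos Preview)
Your proposal is correct and follows essentially the same approach as the paper: the paper's proof simply states that the calculations proceed along the lines of Lemmas~\ref{lem1} and~\ref{lem2}, taking $\chi_4(x)$, $\chi_4(x-1)$, or $\overline{\chi_4}(x-1)$ in place of $\overline{\chi_4}(x)$ in the template computations \eqref{koro} and \eqref{koro1} of Lemma~\ref{lem3}. Your write-up is in fact more detailed than the paper's, and your conjugation symmetry observation (pairing rows $w\leftrightarrow 9-w$ with the $\chi_4$/$\overline{\chi_4}$ columns swapped) is a tidy bookkeeping device that the paper does not make explicit.
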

\begin{proof}
	The calculations follow along the lines of Lemma \ref{lem1} and Lemma \ref{lem2}. For example, in Lemma \ref{lem3}, one can take $\chi_4(x),~\chi_4(x-1)$ or $\overline{\chi_4}(x-1)$ in place of $\overline{\chi_4}(x)$ in $\eqref{koro}$ and $\eqref{koro1}$ (which we denote by $A_x$), 
	and easily evaluate the corresponding character sum. 
\end{proof}
\begin{lemma}\label{lem4}
	Let $q=p^{2t}$, where $p\equiv 3\pmod 4$ is a prime and $t$ is a positive integer. Let $\chi_4$ be a character of order $4$. Let $\varphi$ and $\varepsilon$ be the quadratic and the trivial characters, respectively. Let $q=u^2+2v^2$ for integers $u$ and $v$ such that $u\equiv 3\pmod 4$ and $p\nmid u$ when $p\equiv 3\pmod 8$.
	Then,	
	\begin{align*}
	&{_{3}}F_2\left(\begin{array}{ccc}
	\chi_4, & \chi_4, & \chi_4\\ & \varepsilon, & \varepsilon
	\end{array}\mid 1 \right)=
	{_{3}}F_2\left(\begin{array}{ccc}\overline{\chi_4}, & \overline{\chi_4}, & \overline{\chi_4}\\ & \varepsilon, & \varepsilon \end{array}\mid 1\right)\\
	&={_{3}}F_2\left(\begin{array}{ccc}\chi_4, & \overline{\chi_4}, & \overline{\chi_4}\\ & \varphi, & \varepsilon\end{array}\mid 1\right)
	={_{3}}F_2\left(\begin{array}{ccc}\overline{\chi_4}, & \chi_4, & \chi_4\\ & \varphi, & \varepsilon\end{array}\mid 1\right)\\
	&=\frac{1}{q^2}[-2u(-p)^t].
	\end{align*}
\end{lemma}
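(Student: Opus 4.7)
The plan is to apply Greene's integral representation of ${_3}F_2$ from \cite[Corollary~3.14]{greene}, which rewrites each of the four hypergeometric values as an explicit double character sum on $\mathbb{F}_q\times\mathbb{F}_q$. For instance, the first unfolds to
\[
{_3}F_2\!\left(\begin{array}{ccc}\chi_4,&\chi_4,&\chi_4\\ &\varepsilon,&\varepsilon\end{array}\mid 1\right)
=\sum_{x,y\in\mathbb{F}_q}\chi_4(x)\,\overline{\chi_4}(1-x)\,\chi_4(y)\,\overline{\chi_4}(1-y)\,\overline{\chi_4}(x-y),
\]
and the third collapses to a similar double sum once $\chi_4\cdot\varphi=\overline{\chi_4}$ is used to simplify the parameters. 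The second and fourth are obtained from the first and third, respectively, by the involution $\chi_4\mapsto\overline{\chi_4}$, i.e., by conjugating each summand.

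I would then evaluate the first double sum $S$ directly. Holding $x\neq 0,1$ fixed, the inner sum
\[
S_y(x)=\sum_{y\in\mathbb{F}_q}\chi_4(y)\,\overline{\chi_4}(1-y)\,\overline{\chi_4}(x-y)
\]
is a ${_2}F_1$-type sum that, after the substitution $y=tx$, is expressible as a function of $x$ via repeated applications of Lemmas~\ref{lem1} and~\ref{lem2}. The outer sum $\sum_{x\neq 0,1}\chi_4(x)\,\overline{\chi_4}(1-x)\,S_y(x)$ then decomposes into three kinds of contributions: (i) constant boundary terms from degenerate cases; (ii) multiples of the Jacobi sum $J(\chi_4,\chi_4)=J(\chi_4,\varphi)=\rho=-(-p)^t$, supplied by Lemma~\ref{rr}; and (iii) a residual character sum $\Sigma$ that cannot be reduced to $\rho$ alone.

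The main obstacle is the evaluation of $\Sigma$, and this is precisely where the representation $q=u^2+2v^2$ enters. One expects $\Sigma$ to be a specific quartic Jacobi sum of value $\pm 2u\rho$, with the sign pinned down by the normalisations $u\equiv 3\pmod 4$ and (when $p\equiv 3\pmod 8$) $p\nmid u$. To compute $\Sigma$ I would invoke the explicit quartic Jacobi sum formulas of \cite[Theorem~3.2.1]{berndt} together with the Hasse--Davenport lifting relation, which carries the prime-level identity from $p$ to $q=p^{2t}$. Substituting $\Sigma=2u\rho$ back and combining with the $1/q^2$ normalisation of the integral representation yields
\[
{_3}F_2\!\left(\begin{array}{ccc}\chi_4,&\chi_4,&\chi_4\\ &\varepsilon,&\varepsilon\end{array}\mid 1\right)=\frac{1}{q^2}\bigl[-2u(-p)^t\bigr].
\]
The remaining three values are handled by parallel computations on their respective double sums: the second and fourth are complex conjugates of the first and third, and since the target value is real (with $u,(-p)^t\in\mathbb{Z}$) each conjugate pair agrees. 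For the third (and hence fourth) one repeats the above strategy, with the inner $y$-sum again reducing via Lemmas~\ref{lem1} and~\ref{lem2}, and the residual character sum coinciding with $\Sigma$ up to factors of $\chi_4(-1)=1$ (valid since $q\equiv 1\pmod 8$), so all four sums yield the same value.
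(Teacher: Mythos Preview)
Your approach is genuinely different from the paper's, and the key step in your plan is not substantiated.

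The paper does \emph{not} unfold the ${_3}F_2$ via Corollary~3.14 and evaluate the double sum directly. Instead it applies Greene's closed-form evaluation \cite[Theorem~4.37]{greene}, which expresses ${_3}F_2\bigl(\begin{smallmatrix}\chi_4,&\chi_4,&\chi_4\\&\varepsilon,&\varepsilon\end{smallmatrix}\,\big|\,1\bigr)$ immediately as a sum of two products of binomial coefficients in a character $\chi_8$ of order~$8$ with $\chi_8^2=\chi_4$. These are rewritten as Jacobi sums, simplified via \cite[Theorems~2.1.5, 2.1.6, 3.3.3]{berndt}, and the factor $J(\chi_8,\chi_8^2)\cdot 2\,\mathrm{Re}\,J(\chi_8,\chi_8)\cdot\chi_8(-1)$ is then identified as $-2u(-p)^t$ using \cite[Lemma~3.6(2)]{dawsey}. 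The same device handles the third ${_3}F_2$, and the conjugation argument (which you also use) gives the second and fourth. The whole point is that Theorem~4.37 bypasses any direct character-sum manipulation: the value of $u$ enters because the representation $q=u^2+2v^2$ governs the \emph{octic} Jacobi sum $J(\chi_8,\chi_8)$, not any quartic one.

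Your plan stalls at exactly this point. Lemmas~\ref{lem1} and~\ref{lem2} evaluate sums with \emph{two} character factors; your inner sum $S_y(x)=\sum_y\chi_4(y)\overline{\chi_4}(1-y)\overline{\chi_4}(x-y)$ has three, and neither the substitution $y=tx$ nor repeated use of those lemmas collapses it---it is a genuine ${_2}F_1$ in $x$, so after the outer sum you are back to the ${_3}F_2$ you started with. Your residual $\Sigma$ is therefore essentially the entire object, and you have not said how to compute it. Calling it a ``quartic Jacobi sum'' is misleading: all quartic Jacobi sums here equal $\rho=-(-p)^t$ by Lemma~\ref{rr} and never see $u$; the integer $u$ only appears through $\mathrm{Re}\,J(\chi_8,\chi_8)$, which requires introducing an order-$8$ character. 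Finally, the Hasse--Davenport lift ``from $p$ to $q$'' is not available as stated, since $\mathbb{F}_p$ with $p\equiv 3\pmod 4$ has no character of order~$8$ to lift. The conjugation step linking the four values is fine and agrees with the paper, but the evaluation itself needs either Greene's Theorem~4.37 or an equivalent octic-Jacobi-sum identity that you have not supplied.
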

\begin{proof}
	Let $\chi_8$ be a character of order $8$ such that $\chi_8^2=\chi_4$. Now, Proposition 1 in \cite{katre} tells us that $J(\chi_4,\chi_4)=-(-p)^t$ and hence it is real. Again, by Theorem 3.3.3 and the paragraph preceeding Theorem 3.3.1 in \cite{berndt}, $J(\chi_8,\chi_8^2)=\chi_8(-4)J(\chi_4,\chi_4)$, 
	where $\chi_8(4)=\pm 1$ and thus, is also real. By \cite[Theorem 4.37]{greene}, we have
	\begin{align}\label{doe}
	{_{3}}F_{2}\left(\begin{array}{ccc}\chi_4, & \chi_4, & \chi_4\\ & \varepsilon, & \varepsilon\end{array}\mid 1\right)&=\binom{\chi_8}{\chi_8^2}\binom{\chi_8}{\chi_8^3}+\binom{\chi_8^5}{\chi_8^2}\binom{\chi_8^5}{\overline{\chi_8}}\notag \\
	&=\frac{\chi_8(-1)}{q^2}[J(\chi_8,\chi_8^6)J(\chi_8,\chi_8^5)+J(\chi_8^5,\chi_8^6)J(\chi_8^5,\chi_8)].
	\end{align}
	Using Theorems 2.1.5 and 2.1.6 in \cite{berndt} we obtain
	\begin{align*}
	&J(\chi_8,\chi_8^6)=\chi_8(-1)J(\chi_8,\chi_8),\\
	&J(\chi_8,\chi_8^5)=\chi_8(-1)J(\chi_8,\chi_8^2),\\
	&J(\chi_8^5,\chi_8^6)=\chi_8(-1)\overline{J(\chi_8,\chi_8)}.	
	\end{align*}
	Substituting these values in $\eqref{doe}$ and using \cite[Lemma 3.6 (2)]{dawsey}, we find that 
	\begin{align}\label{real}
		{_{3}}F_{2}\left(\begin{array}{ccc}\chi_4, & \chi_4, & \chi_4\\ & \varepsilon, & \varepsilon\end{array}\mid 1\right)&=\frac{\chi_8(-1)}{q^2}[J(\chi_8,\chi_8)J(\chi_8,\chi_8^2)+\overline{J(\chi_8,\chi_8)}J(\chi_8,\chi_8^2)]\notag \\
		&=\frac{1}{q^2}J(\chi_8,\chi_8^2)\times 2 Re(J(\chi_8,\chi_8))\times \chi_8(-1)\notag \\
		&=\frac{1}{q^2}[-2u(-p)^t].
	\end{align}
Since ${_{3}}F_{2}\left(\begin{array}{ccc}\overline{\chi_4}, & \overline{\chi_4}, & \overline{\chi_4}\\ & \varepsilon, & \varepsilon\end{array}\mid 1\right)$ is the conjugate of 
${_{3}}F_{2}\left(\begin{array}{ccc}\chi_4, & \chi_4, & \chi_4\\ & \varepsilon, & \varepsilon\end{array}\mid 1\right)$, so both are equal as the value given in \eqref{real} is a real number. Using Lemma 4.37 in \cite{greene} again, we have
\begin{align}\label{jack}
{_{3}}F_{2}\left(\begin{array}{ccc}\chi_4, & \overline{\chi_4}, & \overline{\chi_4}\\ & \varphi, & \varepsilon\end{array}| 1\right)&=\binom{\overline{\chi_8}}{\chi_8^2}\binom{\overline{\chi_8}}{\chi_8}+\binom{\chi_8^3}{\chi_8^2}\binom{\chi_8^3}{\overline{\chi_8}^3}\notag \\
&=\frac{\chi_8(-1)}{q^2}[J(\overline{\chi_8},\overline{\chi_8}^2)J(\overline{\chi_8},\overline{\chi_8})+J(\chi_8^3,\overline{\chi_8}^2)J(\chi_8^3,\chi_8^3)].	
\end{align}
Recalling Theorem 2.1.6 in \cite{berndt} gives $J(\chi_8,\chi_8)=J(\chi_8^3,\chi_8^3)$. Also, Theorem 2.1.5 in \cite{berndt} gives
 $J(\chi_8^3,\overline{\chi_8}^2)=\overline{J(\chi_8^5,\chi_8^2)}=\overline{J(\chi_8,\chi_8^2)}=J(\chi_8,\chi_8^2)$. Hence, $\eqref{jack}$ yields
\begin{align*}
	{_{3}}F_{2}\left(\begin{array}{ccc}\chi_4, & \overline{\chi_4}, & \overline{\chi_4}\\ & \varphi, & \varepsilon\end{array}| 1\right)&=\frac{1}{q^2}J(\chi_8,\chi_8^2)\times 2 Re(J(\chi_8,\chi_8))\times \chi_8(-1)\\
	&=\frac{1}{q^2}[-2u(-p)^t],
\end{align*}	
 which is the same real number we found in $\eqref{real}$. Hence, its complex conjugate, namely ${_{3}}F_{2}\left(\begin{array}{ccc}\overline{\chi_4}, & \chi_4, & \chi_4\\ & \varphi, & \varepsilon\end{array}| 1\right)$ is also real and has the same value. This completes the proof of the lemma.
\end{proof}
 Next, we note the following observations given in the beginning of the sixth section in \cite{dawsey}. We state it as a lemma since we shall use it in proving Theorem \ref{thm2}. Greene \cite{greene, greene2} gave some transformation formulae which we 
 list here as follows. Let $A,B,C,D,E$ be characters on $\mathbb{F}_q$. Then, we have
 \begin{align}
 	&{_{3}}F_{2}\left(\begin{array}{ccc}A, & B, & C\\ & D, & E \end{array}| 1\right)={_{3}}F_{2}\left(\begin{array}{ccc}B\overline{D}, & A\overline{D}, & C\overline{D}\\ & \overline{D}, & E\overline{D}\end{array}| 1\right),\label{1}\\
 	&{_{3}}F_{2}\left(\begin{array}{ccc}A, & B, & C\\ & D, & E \end{array}| 1\right)=ABCDE(-1)\cdot {_{3}}F_{2}\left(\begin{array}{ccc}A, & A\overline{D}, & A\overline{E}\\ & A\overline{B}, & A\overline{C}\end{array}| 1\right),\label{2}\\
	&{_{3}}F_{2}\left(\begin{array}{ccc}A, & B, & C\\ & D, & E \end{array}|1\right)=ABCDE(-1)\cdot{_{3}}F_{2}\left(\begin{array}{ccc}B\overline{D}, & B, & B\overline{E}\\ & B\overline{A}, & B\overline{C}\end{array}| 1\right),\label{3}\\
 	&{_{3}}F_{2}\left(\begin{array}{ccc}A, & B, & C\\ & D, & E\end{array}| 1\right)=AE(-1)\cdot{_{3}}F_{2}\left(\begin{array}{ccc}A, & B, & E\overline{C}\\ & AB\overline{D}, & E\end{array}| 1\right),\label{4}\\
 	&{_{3}}F_{2}\left(\begin{array}{ccc}A, & B, & C\\ & D, & E\end{array}| 1\right)=AD(-1)\cdot{_{3}}F_{2}\left(\begin{array}{ccc}A, & D\overline{B}, & C\\ & D, & AC\overline{E} \end{array}| 1\right),\label{5}\\
 	&{_{3}}F_{2}\left(\begin{array}{ccc}A, & B, & C\\ & D, & E \end{array}| 1\right)=B(-1)\cdot{_{3}}F_{2}\left(\begin{array}{ccc}\overline{A}D, & B, & C\\ & D, & BC\overline{E}\end{array}| 1\right),\label{6}\\
 	&{_{3}}F_{2}\left(\begin{array}{ccc}A, & B, & C\\ & D, & E \end{array}| 1\right)=AB(-1)\cdot{_{3}}F_{2}\left(\begin{array}{ccc}\overline{A}D, & \overline{B}D, & C\\ & D, & DE\overline{AB}\end{array}| 1\right).\label{7}
 \end{align}
 Let $X=\{(t_1,t_2,t_3,t_4,t_5)\in\mathbb{Z}_4^5: t_1,t_2,t_3\neq 0,t_4,t_5;~t_1+t_2+t_3\neq t_4,t_5\}$. To each of the transformations in $\eqref{1}$ to $\eqref{7}$, Dawsey and McCarthy in \cite{dawsey} associated a map  on $X$; for example, the transformation in $\eqref{1}$ gives that
 $${_{3}}F_{2}\left(\begin{array}{ccc}\chi_4^{t_1}, & \chi_4^{t_2}, & \chi_4^{t_3}\\ & \chi_4^{t_4}, & \chi_4^{t_5}\end{array}| 1\right)={_{3}}F_{2}\left(\begin{array}{ccc}\chi_4^{t_2-t_4}, &\chi_4^{t_1-t_4} , & \chi_4^{t_3-t_4}\\ & \chi_4^{-t_4}, &\chi_4^{t_5-t_4}\end{array}| 1\right),$$
 so it induces a map $f_1: X\rightarrow X$
 given by
 $$f_1(t_1,t_2,t_3,t_4,t_5)=(t_2-t_4,t_1-t_4,t_3-t_4,-t_4,t_5-t_4).$$
 Similarly, the other transformations in $\eqref{2}$ to $\eqref{7}$ led to the construction of the maps $f_2$ to $f_7$. 
\begin{lemma}\label{dlemma}
	Let $X=\{(t_1,t_2,t_3,t_4,t_5)\in\mathbb{Z}_4^5: t_1,t_2,t_3\neq 0,t_4,t_5;~t_1+t_2+t_3\neq t_4,t_5\}$. Define the functions $f_i:X\rightarrow X,~i\in\{1,2,\ldots,7\}$ in the following manner:
	\begin{align*}
	f_1(t_1,t_2,t_3,t_4,t_5)&=(t_2-t_4,t_1-t_4,t_3-t_4,-t_4,t_5-t_4),\\	
f_{2}\left(t_{1}, t_{2}, t_{3}, t_{4}, t_{5}\right)&=\left(t_{1}, t_{1}-t_{4}, t_{1}-t_{5}, t_{1}-t_{2}, t_{1}-t_{3}\right),\\
f_{3}\left(t_{1}, t_{2}, t_{3}, t_{4}, t_{5}\right)&=\left(t_{2}-t_{4}, t_{2}, t_{2}-t_{5}, t_{2}-t_1,t_2-t_3\right),\\
f_{4}\left(t_{1}, t_{2}, t_{3}, t_{4}, t_{5}\right)&=\left(t_{1}, t_{2}, t_{5}-t_{3}, t_{1}+t_{2}-t_{4}, t_{5}\right),\\
f_{5}\left(t_1, t_{2}, t_{3}, t_{4}, t_{5}\right)&=\left(t_{1}, t_{4}-t_{2}, t_{3}, t_{4}, t_{1}+t_{3}-t_{5}\right),\\
f_{6}\left(t_{1}, t_{2}, t_{3}, t_{4}, t_{5}\right)&=\left(t_{4}-t_{1}, t_{2}, t_{3}, t_{4}, t_{2}+t_{3}-t_{5}\right),\\
f_{7}\left(t_{1},t_{2}, t_{3},t_{4}, t_{5}\right)&=\left(t_{4}-t_{1},t_{4}-t_{2},t_{3}, t_{4}, t_{4}+t_{5}-t_{1}-t_{2}\right).
\end{align*}
Then the group generated by $f_1,\ldots,f_7$, with operation composition of functions, is the set
$$\mathcal{F}=\{f_0,f_i,f_j \circ f_l,f_4\circ f_1,f_6\circ f_2,f_5\circ f_3,f_1\circ f_4\circ f_1: 1\leq i\leq 7,~1\leq j\leq 3,~4\leq l\leq 7\},$$
where $f_0$ is the identity map. \\
Moreover, the group $\mathcal{F}$ acts on the set $X$. If we associate the $5$-tuple $(t_1,t_2,\ldots,t_5)\in X$ to the hypergeometric function ${_{3}}F_{2}\left(\begin{array}{ccc}\chi_4^{t_1}, & \chi_4^{t_2}, & \chi_4^{t_3}\\ & \chi_4^{t_4}, & \chi_4^{t_5}\end{array}| 1\right)$, 
then each orbit of the group action consists of a number of $5$-tuples $(t_1,t_2,\ldots,t_5)$, and the corresponding ${}_3 F_{2}$ terms have the same value.
\end{lemma}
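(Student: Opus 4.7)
The proof splits into three parts: (a) each generator $f_i$ sends $X$ into $X$; (b) each $f_i$ preserves the value of the associated ${}_3F_2$-term, hence so does any composition; and (c) the $24$ elements listed comprise a group, i.e.\ $\mathcal{F}$ is closed under composition. Parts (a) and (b) together immediately imply that $\mathcal{F}$ acts on $X$ with the ${}_3F_2$ value constant on each orbit.

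For (a), I would fix $(t_1,\ldots,t_5)\in X$, write down the image tuple $(s_1,\ldots,s_5)=f_i(t_1,\ldots,t_5)$, and verify the constraints $s_1,s_2,s_3\notin\{0,s_4,s_5\}$ and $s_1+s_2+s_3\notin\{s_4,s_5\}$. In each case the required inequalities translate into linear relations among the $t_i$'s which are either among, or immediate consequences of, the hypotheses defining $X$. This is routine substitution for all seven generators.

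Part (b) is immediate from the definition of the $f_i$, for each map was constructed precisely to record the change of exponents in Greene's transformations \eqref{1}--\eqref{7}; when the characters are specialized to powers of $\chi_4$, the prefactors of the form $ABCDE(-1)$ appearing in \eqref{2}--\eqref{7} all evaluate to $1$, because $\chi_4(-1)=1$ (as $q\equiv 1\pmod 8$). Hence each generator preserves the ${}_3F_2$ value, and by induction on word length, so does every element of the group generated by the $f_i$.

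The main obstacle is (c). As a first step I verify by direct computation on a generic tuple that each of the seven generators is an involution, $f_i^2=f_0$, so there is no need to distinguish generators from their inverses. A short calculation then shows that $\{f_0,f_1,f_2,f_3\}$ is a Klein four-subgroup, for instance $f_1\circ f_2=f_2\circ f_1=f_3$, so the three non-identity elements commute pairwise. The remaining work is to evaluate the mixed products: for each $j\in\{1,2,3\}$ and $l\in\{4,5,6,7\}$ one obtains the product $f_j\circ f_l$ (giving the $12$ listed elements), while the products $f_l\circ f_j$ and longer alternating words reduce via the involution property to expressions of the form $f_4\circ f_1$, $f_6\circ f_2$, $f_5\circ f_3$, or $f_1\circ f_4\circ f_1$, exhausting the $4$ remaining entries of $\mathcal{F}$. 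One finally checks that every composition $g\circ g'$ with $g,g'\in\mathcal{F}$ reduces to an element already in the list, so no new element appears. No deep argument is required, but the bookkeeping for the $24\times 7$ table of generator-products is the unavoidable tedious core of the proof.
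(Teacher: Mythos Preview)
Your plan is correct and essentially reproduces what the cited reference does, whereas the paper itself offers no argument at all: its entire proof of this lemma is the sentence ``For a proof, see Section~6 of \cite{dawsey}.'' So you are not taking a different route from the paper---you are supplying the route the paper delegated to Dawsey--McCarthy.

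A few remarks on the content of your sketch. Part~(b) is sound: since $q\equiv 1\pmod 8$ one has $\chi_4(-1)=1$, so every sign prefactor in \eqref{2}--\eqref{7} is $+1$ and the $f_i$ genuinely preserve the ${}_3F_2$ value. Your involution and Klein-four checks for $f_1,f_2,f_3$ are correct (e.g.\ $f_1\circ f_2=f_2\circ f_1=f_3$ follows by direct substitution). The only place to be careful in part~(c) is your claim that the twelve products $f_l\circ f_j$ with $l\in\{4,5,6,7\}$, $j\in\{1,2,3\}$ collapse to the four extra listed elements together with elements already present among the $f_j\circ f_l$; this is true but is exactly the computation you have flagged as ``unavoidable tedious core,'' and the reduction does use relations among $f_4,\dots,f_7$ (for instance $f_7=f_5\circ f_6=f_6\circ f_5$) beyond the involution property alone. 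Once that table is filled in, closure follows and the group has the $24$ listed elements. There is no missing idea---only the promised bookkeeping.
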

\begin{proof}
 For a proof, see Section $6$ of \cite{dawsey}.
\end{proof}
In order to prove Theorem \ref{asym}, the following famous theorem, due to Andr\'e Weil, serves as the crux. We state it here.
\begin{theorem}[Weil's estimate]\label{weil}
	Let $\mathbb{F}_q$ be the finite field of order $q$, and let $\chi$ be a character of $\mathbb{F}_q$ of order $s$. Let $f(x)$ be a polynomial of degree $d$ over $\mathbb{F}_q$ such that $f(x)$ cannot be written in the form $c\cdot {h(x)}^s$, where $c\in\mathbb{F}_q$. Then
	$$\Bigl\lvert\sum_{x\in\mathbb{F}_q}\chi(f(x))\Bigr\rvert\leq (d-1)\sqrt{q}.$$
\end{theorem}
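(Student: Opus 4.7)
The plan is to deduce this estimate from the Riemann hypothesis for curves over finite fields (Weil's theorem), applied to the cyclic cover $y^s = f(x)$ of the projective line. The sum $S_1 = \sum_{x \in \mathbb{F}_q} \chi(f(x))$ is essentially the first coefficient in the logarithm of an $L$-function attached to this cover, and once one knows that this $L$-function is a polynomial of degree at most $d-1$ whose reciprocal roots all have absolute value $\sqrt{q}$, the bound is immediate.

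Concretely, I would first define, for each $n \geq 1$,
$$S_n = \sum_{x \in \mathbb{F}_{q^n}} \chi \circ N_{\mathbb{F}_{q^n}/\mathbb{F}_q}(f(x)),$$
and form the generating series
$$L(T) := \exp\!\left(\sum_{n=1}^{\infty} \frac{S_n}{n}\,T^n\right).$$
The hypothesis that $f$ is not of the form $c \cdot h(x)^s$ is (after a small strengthening at the level of $\overline{\mathbb{F}_q}[x]$, corresponding to Kummer's criterion) precisely what is needed for the affine curve $C \colon y^s = f(x)$ to be geometrically irreducible. Factoring the zeta function of a smooth projective model $\widetilde{C}$ of $C$ according to the characters of the Galois group $\mathbb{Z}/s\mathbb{Z}$ of the cover $\widetilde{C} \to \mathbb{P}^1$ then identifies $L(T)$ with the $\chi$-isotypic component of that zeta function. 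A degree count, via Riemann--Hurwitz applied to the cover and a careful tracking of ramification at the zeros of $f$ and at the point at infinity, shows that $L(T)$ is a polynomial of degree at most $d-1$.

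Next, I would invoke the Riemann hypothesis for function fields: every reciprocal root of $L(T)$ has absolute value $\sqrt{q}$. Writing $L(T) = \prod_{i=1}^{D}(1 - \alpha_i T)$ with $D \leq d-1$ and $|\alpha_i| = \sqrt{q}$, comparison of the coefficient of $T$ with the defining expansion yields $S_1 = \sum_{i=1}^{D}\alpha_i$, whence
$$\left|\,\sum_{x \in \mathbb{F}_q}\chi(f(x))\,\right| = |S_1| \leq D\sqrt{q} \leq (d-1)\sqrt{q}.$$

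The main obstacle is that the Riemann hypothesis for curves is itself a deep theorem of Weil; any genuinely self-contained treatment must either invoke it as a black box or follow the elementary polynomial method of Stepanov, refined by Bombieri and Schmidt, which constructs auxiliary polynomials vanishing to high order on the set of $x$ that contribute coherently to the sum. The secondary obstacles lie in the precise degree bookkeeping of $L(T)$ and in handling the trivial Euler factors coming from ramification, particularly when $f$ has repeated roots or $\gcd(s, \deg f) > 1$; these have to be controlled carefully in order to land on the sharp constant $d-1$ rather than a weaker one such as $(d-1)(s-1)$.
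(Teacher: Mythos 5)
The paper does not prove this statement at all: it is quoted as a classical theorem of Weil (essentially the form in Lidl--Niederreiter, Theorem 5.41, or Schmidt's monograph) and used as a black box in the proof of Theorem \ref{asym}. So there is no proof in the paper to compare yours against, and the question is only whether your sketch would stand on its own.

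As an outline it follows the standard route --- attach the $L$-function $L(T)=\exp\bigl(\sum_n S_nT^n/n\bigr)$ to the Kummer cover $y^s=f(x)$, show $L$ is a polynomial of degree at most $d-1$, and invoke the Riemann hypothesis for function fields --- and that route is correct. But as written it is not a proof: the two items you label as ``obstacles'' (the Riemann hypothesis for curves, and the degree count for $L(T)$) are the entire content of the theorem, and you defer both. Two further points deserve care if you were to carry this out. First, the hypothesis as stated in the paper (``$f$ is not $c\cdot h(x)^s$ with $c\in\mathbb{F}_q$'') is really needed over $\overline{\mathbb{F}_q}$, as you note; for a nontrivial character of order $s$ dividing $q-1$ the two formulations coincide, but the irreducibility of the cover is a geometric condition. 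Second, the Riemann--Hurwitz bookkeeping naturally produces a bound in terms of the number $m$ of \emph{distinct} roots of $f$ (the sharp statement is $|S_1|\leq (m-1)\sqrt q$), and the paper's bound follows only because $m\leq d$; your worry that one might land on a weaker constant like $(d-1)(s-1)$ is resolved precisely by isolating the single $\chi$-isotypic factor of the zeta function rather than the whole of it. None of this is a wrong turn, but a referee would read your text as a statement of intent rather than a proof; since the paper itself cites the result, the cleanest fix is to do likewise.
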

The rest of the article goes as follows. In Section $3$, we prove Theorem \ref{thm1}. In Section $4$, we prove Theorem \ref{thm2}. Finally, in Section $5$ we prove the asymptotic formula for the number of cliques of any order in Peisert graphs.  
To count the number of cliques in Peisert graphs, we note that since the graph is vertex-transitive, so any two vertices in the graph are contained in the same number of cliques of a particular order.
We will also use the following notation throughout the proofs. For an induced subgraph $S$ of a Peisert graph and a vertex $v\in S$, we denote by $k_3(S)$ and $k_3(S,v)$ the number of cliques of order $3$ in $S$ and the number of cliques of order $3$ in $S$ containing $v$, respectively.
\section{number of $3$-order cliques in $P^\ast(q)$}
In this section, we prove Theorem \ref{thm1}. Recall that $\mathbb{F}_q^\ast=\langle g\rangle$ and $H=\langle g^4\rangle\cup g\langle g^4\rangle$. Also, $\langle H\rangle$ is the subgraph induced by $H$ and $h=1-\chi_4(g)$.
\begin{proof}[Proof of Theorem \ref{thm1}]
Using the vertex-transitivity of $P^\ast(q)$, we find that
\begin{align}\label{trian}
k_3(P^\ast(q))&=\frac{1}{3}\times q\times k_3(P^\ast(q),0)\notag \\
&=\frac{q}{3}\times \text{number of edges in }\langle H\rangle .
\end{align} 
Now, 
\begin{align}\label{ww-new}
\text{the number of edges in~} \langle H\rangle =\frac{1}{2}\times \mathop{\sum\sum}_{\chi_4(x-y)\in \{1, \chi_4(g)\}} 1,
\end{align}
where the 1st sum is taken over all $x$ such that $\chi_4(x)\in\{1,\chi_4(g)\}$ and the 2nd sum is taken over all $y\neq x$ such that $\chi_4(y)\in\{1,\chi_4(g)\}$. Hence, using \eqref{qq} in \eqref{ww-new}, we find that 
\begin{align}\label{ww}
&\text{the number of edges in~}\langle H\rangle \notag \\
&=\frac{1}{2\times 4^3}\sum\limits_{x\neq 0}(2+h\chi_4(x)+\overline{h}\overline{\chi_4}(x))\notag\\
&\hspace{1.5cm}\times \sum\limits_{y\neq 0,x}[(2+h\chi_4(y)+\overline{h}\overline{\chi_4}(y))(2+h\chi_4(x-y)+\overline{h}\overline{\chi_4}(x-y))].
\end{align}
We expand the inner summation in $\eqref{ww}$ to obtain
\begin{align}\label{ee}
&\sum\limits_{y\neq 0,x}[4+2h\chi_4(y)+2\overline{h}\overline{\chi_4}(y)+2h\chi_4(x-y)+2\overline{h}\overline{\chi_4}(x-y)+2\chi_4(y)\overline{\chi_4}(x-y)\notag \\
&	+2\overline{\chi_4}(y)\chi_4(x-y)-2\chi_4(g)\chi_4(y(x-y))+2\chi_4(g)\overline{\chi_4}(y(x-y))].
\end{align}
We have 
\begin{align}\label{new-eqn3}
\sum\limits_{y\neq 0,x}\chi_4(y(x-y))=\sum\limits_{y\neq 0,1}\chi_4(xy)\chi_4(x-xy)=\varphi(x) J(\chi_4,\chi_4).
\end{align}
Using Lemma \ref{lem2} and \eqref{new-eqn3}, \eqref{ee} yields
\begin{align}\label{new-eqn2}
&\sum\limits_{y\neq 0,x}[(2+h\chi_4(y)+\overline{h}\overline{\chi_4}(y))(2+h\chi_4(x-y)+\overline{h}\overline{\chi_4}(x-y))]\notag \\
&=4(q-3)-4h\chi_4(x)-4\overline{h}\overline{\chi_4}(x)-2\chi_4(g)\varphi(x)J(\chi_4,\chi_4)+2\chi_4(g)\varphi(x)\overline{J(\chi_4,\chi_4)}.
\end{align}
Now, putting \eqref{new-eqn2} into \eqref{ww}, and then using Lemma \ref{rr}, we find that 
\begin{align*}
&\text{the number of edges in }\langle H\rangle\\
=&\frac{1}{2\times 4^3}\sum\limits_{x\neq 0}[(2+h\chi_4(x)+\overline{h}\overline{\chi_4}(x))(4(q-3)-4h\chi_4(x)-4\overline{h}\overline{\chi_4}(x))]\\
=&\frac{1}{2\times 4^3}\sum\limits_{x\neq 0}[8(q-5)+(4h(q-3)-8h)\chi_4(x)+(4\overline{h}(q-3)-8\overline{h})\overline{\chi_4}(x)]\\
=&\frac{(q-1)(q-5)}{16}. 
\end{align*}
Substituting this value in $\eqref{trian}$ gives us the required result.
\end{proof}
\section{number of $4$-order cliques in $P^\ast(q)$}
In this section, we prove Theorem \ref{thm2}. First, we recall again that $\mathbb{F}_q^\ast=\langle g\rangle$ and $H=\langle g^4\rangle\cup g \langle g^4\rangle$. Let $J(\chi_4,\chi_4)=J(\chi_4,\varphi)=\rho$, where the value of $\rho$ is given by Lemma \ref{rr}. 
Let $q=u^2+2v^2$ for integers $u$ and $v$ such that $u\equiv 3\pmod 4$ and $p\nmid u$ when $p\equiv 3\pmod 8$. Let $\chi_8$ be a character of order $8$ such that $\chi_8^2=\chi_4$. Note that in the proof we shall use the fact that $\chi_4(-1)=1$ multiple times. Recall that $h=1-\chi_4(g)$.
\begin{proof}[Proof of Theorem \ref{thm2}]
Noting again that $P^\ast(q)$ is vertex-transitive, we find that
\begin{align}\label{tt}
k_4(P^\ast(q))
&=\frac{q}{4}\times \text{ number of $4$-order cliques in $P^\ast(q)$ containing }0\notag \\
&=\frac{q}{4}\times k_3(\langle H\rangle).
\end{align}
Let $a, b\in H$ be such that $\chi_4(ab^{-1})=1$. We note that
\begin{align}\label{new-eqn4}
k_3(\langle H\rangle, a) =\frac{1}{2}\times \mathop{\sum\sum}_{\chi_4(x-y)\in \{1, \chi_4(g)\}} 1,
\end{align}
where the 1st sum is taken over all $x$ such that $\chi_4(x), \chi_4(a-x)\in\{1,\chi_4(g)\}$ and the 2nd sum is taken over all $y\neq x$ such that $\chi_4(y), \chi_4(a-y)\in\{1,\chi_4(g)\}$. Hence, using \eqref{qq} in \eqref{new-eqn4}, we find that
\begin{align*}
&k_3(\langle H\rangle, a)\\
&=\frac{1}{2\times 4^5}\sum_{x\neq 0,a}\sum_{y\neq 0,a,x}[(2+h\chi_4(a-x)+\overline{h}\overline{\chi_4}(a-x))\\
&\times (2+h\chi_4(a-y)+\overline{h}\overline{\chi_4}(a-y))(2+h\chi_4(x-y)+\overline{h}\overline{\chi_4}(x-y))\\
&\times (2+h\chi_4(x)+\overline{h}\overline{\chi_4}(x))(2+h\chi_4(y)+\overline{h}\overline{\chi_4}(y))].
\end{align*}
Using the substitution $Y=ba^{-1}y$, the sum indexed by $y$ in the above yields
\begin{align*}
&k_3(\langle H\rangle, a)\\
&=\frac{1}{2\times 4^5}\sum_{x\neq 0,a}\sum_{Y\neq 0,b,ba^{-1}x}
[(2+h\chi_4(a-x)+\overline{h}\overline{\chi_4}(a-x))\\
&\times (2+h\chi_4(Y-b)+\overline{h}\overline{\chi_4}(Y-b))(2+h\chi_4(Y-ba^{-1}x)+\overline{h}\overline{\chi_4}(Y-ba^{-1}x))\\
&\times (2+h\chi_4(x)+\overline{h}\overline{\chi_4}(x))(2+h\chi_4(Y)+\overline{h}\overline{\chi_4}(Y))] \\
&=\frac{1}{2\times 4^5}\sum_{Y\neq 0,b}\sum_{x\neq 0,a,ab^{-1}Y}[(2+h\chi_4(a-x)+\overline{h}\overline{\chi_4}(a-x))\\
&\times
(2+h\chi_4(Y-b)+\overline{h}\overline{\chi_4}(Y-b)) (2+h\chi_4(Y-ba^{-1}x)+\overline{h}\overline{\chi_4}(Y-ba^{-1}x))\\
&\times (2+h\chi_4(x)+\overline{h}\overline{\chi_4}(x))(2+h\chi_4(Y)+\overline{h}\overline{\chi_4}(Y))].
\end{align*}
Again, using the substitution $X=ba^{-1}x$ yields 
\begin{align*}
&k_3(\langle H\rangle, a)\\
&=\frac{1}{2\times 4^5}\sum_{Y\neq 0,b}\sum_{X\neq 0,b,Y}[(2+h\chi_4(b-X)+\overline{h}\overline{\chi_4}(b-X))\\
&\times(2+h\chi_4(b-Y)+\overline{h}\overline{\chi_4}(b-Y))(2+h\chi_4(X-Y)+\overline{h}\overline{\chi_4}(X-Y))\\
&\times (2+h\chi_4(X)+\overline{h}\overline{\chi_4}(X))(2+h\chi_4(Y)+\overline{h}\overline{\chi_4}(Y))] \\
&=k_3(\langle H\rangle,b).     
\end{align*}
Thus, if $a, b\in H$ are such that $\chi_4(ab^{-1})=1$, then 
\begin{align}\label{cond}
k_3(\langle H\rangle,a)=k_3(\langle H\rangle,b).	
\end{align} 
Let $\langle g^4\rangle =\{x_1,\ldots,x_{\frac{q-1}{4}}\}$ with $x_1=1$ and $g\langle g^4\rangle=\{y_1,\ldots, y_{\frac{q-1}{4}}\}$ with $y_1=g$. Then,
\begin{align}\label{pick}
\sum_{i=1}^{\frac{q-1}{4}}k_3(\langle H\rangle,x_i)+\sum_{i=1}^{\frac{q-1}{4}}k_3(\langle H\rangle,y_i)=3\times k_3(\langle H\rangle).
\end{align}
By $\eqref{cond}$, we have  
$$k_3(\langle H\rangle,x_1)=k_3(\langle H\rangle,x_2)=\cdots=k_3(\langle H\rangle,x_{\frac{q-1}{4}})$$
and 
$$k_3(\langle H\rangle,y_1)=k_3(\langle H\rangle,y_2)=\cdots=k_3(\langle H\rangle,y_{\frac{q-1}{4}}).$$
Hence, \eqref{pick} yields 
\begin{align}\label{1g}
k_3(\langle H\rangle)=\frac{q-1}{12}[k_3(\langle H\rangle, 1)+ k_3(\langle H\rangle, g)].
\end{align}
Thus, we need to find only $k_3(\langle H\rangle, 1)$ and $k_3(\langle H\rangle, g)$. We first find $k_3(\langle H\rangle, 1)$. 
\par We have 
\begin{align}\label{xandy}
&k_3(\langle H\rangle,1)\notag \\
&=\frac{1}{2\times 4^5}\sum_{x\neq 0,1}[ (2+h\chi_4(1-x)+\overline{h}\overline{\chi_4}(1-x))(2+h\chi_4(x)+\overline{h}\overline{\chi_4}(x))]\notag\\
&\hspace{1.5cm} \sum_{y\neq 0,1,x}[(2+h\chi_4(1-y)+\overline{h}\overline{\chi_4}(1-y))
(2+h\chi_4(x-y)+\overline{h}\overline{\chi_4}(x-y)) \notag \\
&\hspace{2.5cm}\times  (2+h\chi_4(y)+\overline{h}\overline{\chi_4}(y))]. 
\end{align}
Let $i_1,i_2,i_3\in\{\pm 1\} $ and let $F_{i_1,i_2,i_3}$ denote the term $\chi_4^{i_1}(y)\chi_4^{i_2}(1-y)\chi_4^{i_3}(x-y)$. Using this notation, we expand and evaluate the inner summation in \eqref{xandy}. We have 
\begin{align}\label{sun}
&\sum_{y\neq 0,1,x}[2+h\chi_4(y)+\overline{h}\overline{\chi_4}(y)][2+h\chi_4(1-y)+\overline{h}\overline{\chi_4}(1-y)][2+h\chi_4(x-y)+\overline{h}\overline{\chi_4}(x-y)]\notag\\
&=\sum_{y\neq 0,1,x}[8+4h\chi_4(y)+4\overline{h}\overline{\chi_4}(y)+4h\chi_4(1-y)+4\overline{h}\overline{\chi_4}(1-y)+4h\chi_4(x-y)\notag\\&+4\overline{h}\overline{\chi_4}(x-y)
+4\chi_4(y)\overline{\chi_4}(1-y)+4\overline{\chi_4}(y)\chi_4(1-y)+4\chi_4(y)\overline{\chi_4}(x-y)\notag\\
&+4\overline{\chi_4}(y)\chi_4(x-y)
+4\chi_4(1-y)\overline{\chi_4}(x-y)+4\overline{\chi_4}(1-y)\chi_4(x-y)\notag\\
&+2h^2\chi_4(y)\chi_4(1-y)+2{\overline{h}}^2\overline{\chi_4}(y)\overline{\chi_4}(1-y)+2h^2\chi_4(y)\chi_4(x-y)\notag\\
&+2{\overline{h}}^2\overline{\chi_4}(y)\overline{\chi_4}(x-y)
+2h^2\chi_4(1-y)\chi_4(x-y)+2{\overline{h}}^2\overline{\chi_4}(1-y)\overline{\chi_4}(x-y)\notag\\
&+h^3 F_{1,1,1}+2hF_{1,1,-1}+2hF_{1,-1,1}+2\overline{h}F_{1,-1,-1}+2hF_{-1,1,1}+2\overline{h}F_{-1,1,-1}
\notag\\
&+2\overline{h}F_{-1,-1,1}+{\overline{h}}^3F_{-1,-1,-1}].
\end{align}
Now, referring to Lemmas \ref{lem1} and \ref{lem2}, we can easily check that any term of the form $\sum\limits_{y}\chi_4(\cdot)\overline{\chi_4}(\cdot)$ gives $-1$, $\sum\limits_y \chi_4((y-1)(y-x))$ 
gives $\varphi(x-1)\rho$ and $\sum\limits_y \chi_4(y(y-x))$ gives $\varphi(x)\rho$. Hence, $\eqref{sun}$ yields
\begin{align}\label{yonly}
&\sum_{y\neq 0,1,x}[2+h\chi_4(y)+\overline{h}\overline{\chi_4}(y)][2+h\chi_4(1-y)+\overline{h}\overline{\chi_4}(1-y)][2+h\chi_4(x-y)+\overline{h}\overline{\chi_4}(x-y)]\notag \\
&=A+B\chi_4(x)+\overline{B}\overline{\chi_4}(x)+B\chi_4(x-1)+\overline{B}\overline{\chi_4}(x-1)-4\chi_4(x)\overline{\chi_4}(x-1)\notag \\
&-4\overline{\chi_4}(x)\chi_4(x-1)-2h^2\chi_4(x)\chi_4(x-1)-2{\overline{h}}^2\overline{\chi_4}(x)\overline{\chi_4}(x-1)\notag \\
&+h^3 F_{1,1,1}+2hF_{1,1,-1}+2hF_{1,-1,1}+2\overline{h}F_{1,-1,-1}+2hF_{-1,1,1}+2\overline{h}F_{-1,1,-1}\notag
\\&+2\overline{h}F_{-1,-1,1}+{\overline{h}}^3F_{-1,-1,-1}\notag\\
&=:\mathcal{I},
\end{align}
where $A=8(q-8)$ and $B=-12h$. 
\par Next, we introduce some notations. Let 
\begin{align*}
B_1&=16(q-9)+6B+\overline{B}h^2,\\
D_1&=2\overline{B}-8\overline{h}+Bh^2-4h^3,\\
E_1&=8(q-9)+4Bh,\\
F_1&=16(q-9)+4 Re(B\overline{h}).
\end{align*} 
For $i\in\{1,2,3,4\}$ and $j\in\{1,2,\ldots,8\}$, we define the following character sums.
\begin{align*}
T_j&:=\sum_{x\neq 0,1}\sum_y \chi_4^{i_1}(y)\chi_4^{i_2}(1-y)\chi_4^{i_3}(x-y),\\
U_{ij}&:=\sum_{x\neq 0,1}\chi_4^l(m)\sum_y \chi_4^{i_1}(y)\chi_4^{i_2}(1-y)\chi_4^{i_3}(x-y),\\
V_{ij}&:=\sum_x\chi_4^{l_1}(x)\chi_4^{l_2}(1-x)\sum_y \chi_4^{i_1}(y)\chi_4^{i_2}(1-y)\chi_4^{i_3}(x-y),
\end{align*}
where 
\begin{align*}
l = \left\{
\begin{array}{lll}
1, & \hbox{if $i$ is odd,} \\
-1, & \hbox{\text{otherwise};}
\end{array}
\right.
\end{align*}
\begin{align*}
m = \left\{
\begin{array}{lll}
x, & \hbox{if $i\in\{1,2\}$,} \\
1-x, & \hbox{\text{otherwise;}}
\end{array}
\right.
\end{align*} 
and 
\begin{align*}
(l_1,l_2) = \left\{
\begin{array}{lll}
(1,1), & \hbox{if $i=1$,} \\
(1,-1), & \hbox{if $i=2$,} \\
(-1,1), & \hbox{if $i=3$,} \\
(-1,-1), & \hbox{if $i=4$.}
\end{array}
\right.
\end{align*} 
Also, corresponding to each $j$, let $(i_1,i_2,i_3)$ take the value according to the following table:
\begin{table}[h!]
	\begin{center}
		\begin{tabular}{ |c|  c|  c|  c| }
			\hline 
			$j$ & $i_1$ & $i_2$ & $i_3$ \\
			\hline
			$1$ & $1$ & $1$ & $1$ \\ 
			$2$ & $1$ & $1$ & $-1$ \\ 
			$3$ & $1$ & $-1$ & $1$\\ 
			$4$ & $1$ & $-1$ & $-1$\\ 
			$5$ & $-1$ & $1$ & $1$\\ 
			$6$ & $-1$ & $1$ & $-1$\\ 
			$7$ & $-1$ & $-1$ & $1$\\ 
			$8$ & $-1$ & $-1$ & $-1$\\ 
			\hline 
		\end{tabular}
	\end{center}
\end{table}\\
Then, using $\eqref{yonly}$ and the notations we just described, $\eqref{xandy}$ yields
\begin{align*}
&k_3(\langle H\rangle,1)=\frac{1}{2048}\sum_{x\neq 0,1}[2+h\chi_4(x)+\overline{h}\overline{\chi_4}(x)][2+h\chi_4(1-x)+\overline{h}\overline{\chi_4}(1-x)]\times \mathcal{I}\\
=&\frac{1}{2048}\sum_{x\neq 0,1}\Big[  32(q-15)+B_1\chi_4(x)+\overline{B_1}\overline{\chi_4}(x)+B_1\chi_4(x-1)+\overline{B_1}\overline{\chi_4}(x-1)\\
&+4 Re(Bh)\varphi(x)+4 Re(Bh)\varphi(x-1)+D_1\chi_4(x)\varphi(x-1)+\overline{D_1}\overline{\chi_4}(x)\varphi(x-1)\\
&+D_1\varphi(x)\chi_4(x-1)+\overline{D_1}\varphi(x)\overline{\chi_4}(x-1)+E_1\chi_4(x)\chi_4(x-1)+\overline{E_1}\overline{\chi_4}(x)\overline{\chi_4}(x-1) \\
&+F_1\chi_4(x)\overline{\chi_4}(1-x)+\overline{F_1}\overline{\chi_4}(x)\chi_4(x-1)\Big]\\
&+\frac{1}{2\times 4^5}\Big[ 4h^3T_1+8hT_2+8h T_3+8\overline{h}T_4+8h T_5+8\overline{h}T_6+8\overline{h}T_7+4{\overline{h}}^3 T_8\\ 
&+2h^4  U_{11}+4h^2 U_{12}+4h^2 U_{13}+8 U_{14}+4h^2 U_{15}+8 U_{16}+8 U_{17}+4{\overline{h}}^2 U_{18}\\
&+4h^2 U_{21} +8 U_{22}+8 U_{23}+4{\overline{h}}^2 U_{24}+8 U_{25}+4{\overline{h}}^2 U_{26}+4{\overline{h}}^2 U_{27}+2{\overline{h}}^4 U_{28}\\
&+2h^4  U_{31}+4h^2 U_{32}+4h^2 U_{33}+8 U_{34}+4h^2 U_{35}+8 U_{36}+8 U_{37}+4{\overline{h}}^2 U_{38}\\
&+4h^2 U_{41} +8 U_{42}+8 U_{43}+4{\overline{h}}^2 U_{44}+8 U_{45}+4{\overline{h}}^2 U_{46}+4{\overline{h}}^2 U_{47}+2{\overline{h}}^4 U_{48}\\
&+h^5 V_{11}+2h^3 V_{12}+2h^3 V_{13}+4h V_{14}+2h^3 V_{15}+4h V_{16}+4h V_{17}+4\overline{h} V_{18}\\
&+2h^3 V_{21}+4h V_{22}+4h V_{23}+4\overline{h}V_{24}+4h V_{25}+4\overline{h}V_{26}+4\overline{h}V_{27}+2{\overline{h}}^3 V_{28}\\
&+2h^3 V_{31}+4h V_{32}+4h V_{33}+4\overline{h}V_{34}+4h V_{35}+4\overline{h}V_{36}+4\overline{h}V_{37}+2{\overline{h}}^3 V_{38}\\
&+4h V_{41}+4\overline{h}V_{42}+4\overline{h}V_{43}+2{\overline{h}}^3 V_{44}+4\overline{h}V_{45}+2{\overline{h}}^3 V_{46}+2{\overline{h}}^3 V_{47}+{\overline{h}}^5 V_{48} \Big]. 	
\end{align*}
Using Lemmas \ref{lem3}, \ref{lema1} and \ref{corr}, we find that 
\begin{align}\label{bigex}
&k_3(\langle H\rangle,1)=\frac{1}{2048}\left[32(q^2-20q+81) \right.\notag \\
&+h^5 V_{11}+2h^3 V_{12}+2h^3 V_{13}+4h V_{14}+2h^3 V_{15}+4h V_{16}+4h V_{17}+4\overline{h} V_{18}\notag \\
&+2h^3 V_{21}+4h V_{22}+4h V_{23}+4\overline{h}V_{24}+4h V_{25}+4\overline{h}V_{26}+4\overline{h}V_{27}+2{\overline{h}}^3 V_{28}\notag \\
&+2h^3 V_{31}+4h V_{32}+4h V_{33}+4\overline{h}V_{34}+4h V_{35}+4\overline{h}V_{36}+4\overline{h}V_{37}+2{\overline{h}}^3 V_{38}\notag \\
&\left.+4h V_{41}+4\overline{h}V_{42}+4\overline{h}V_{43}+2{\overline{h}}^3 V_{44}+4\overline{h}V_{45}+2{\overline{h}}^3 V_{46}+2{\overline{h}}^3 V_{47}+{\overline{h}}^5 V_{48}\right]. 
\end{align}
Now, we convert each term of the form $V_{i j}$ $[i \in\{1,2,3,4\}, j\in\{1,2, \ldots, 8\}]$ into its equivalent $q^{2}\cdot {_{3}}F_{2}$ form. We use the notation $(t_{1}, t_{2}, \ldots, t_{5})\in \mathbb{Z}_4^5$ 
for the term $q^{2}\cdot {_{3}}F_{2}\left(\begin{array}{ccc}\chi_4^{t_{1}}, & \chi_4^{t_{2}}, & \chi_4^{t_{3}}\\ & \chi_4^{t_{4}}, & \chi_4^{t_{5}}\end{array}| 1\right)$.
Then, $\eqref{bigex}$ yields 
\begin{align}\label{bigexp}
&k_3(\langle H\rangle,1)=\frac{1}{2048}\left[32(q^2-20q+81)\notag \right. \\
&\hspace{.5cm}+h^{5}(3,1,1,2,2)+2 h^{3}(1,1,3,2,0)+2 h^{3}(3,1,1,0,2)+4 h(1,1,3,0,0)\notag \\
&\hspace{.5cm}+2h^{3}(3,3,1,0,2)+4 h(1,3,3,0,0)+4 h(3,3,1,2,2)+4 \overline{h}(1,3,3,2,0) \notag\\
&\hspace{.5cm}+2 h^{3}(3,1,3,2,2)+4 h(1,1,1,2,0)+4 h(3,1,3,0,2)+4 \overline{h}(1,1,1,0,0)\notag\\
&\hspace{.5cm}+4 h(3,3,3,0,2)+4 \overline{h}(1,3,1,0,0)+4 \overline{h}(3,3,3,2,2)+2 {\overline{h}}^{3}(1,3,1,2,0)\notag \\
&\hspace{.5cm}+2 h^{3}(3,1,3,2,0)+4 h(1,1,1,2,2)+4 h(3,1,3,0,0)+4 \overline{h}(1,1,1,0,2)\notag\\
&\hspace{.5cm}+4 h(3,3,3,0,0)+4 \overline{h}(1,3,1,0,2)+4 \overline{h}(3,3,3,2,0)+2 {\overline{h}}^{3}(1,3,1,2,2)  \notag \\
&\hspace{.5cm}+4 h(3,1,1,2,0)+4 \overline{h}(1,1,3,2,2)+4 \overline{h}(3,1,1,0,0)+2{\overline{h}}^{3}(1,1,3,0,2)\notag\\
&\hspace{.5cm}\left. +4 \overline{h}(3,3,1,0,0)+2 \overline{h}^{3}(1,3,3,0,2)+ 2\overline{h}^{3}(3,3,1,2,0)+{\overline{h}}^{5}(1,3,3,2,2)\right].
\end{align}
Next, we use Lemma \ref{dlemma} alongwith the notations therein. We list the tuples $(t_1,t_2,\ldots, t_5)$ in each orbit of the group action of $\mathcal{F}$ on $X$, and then group the corresponding terms in $\eqref{bigexp}$ together. 
The orbit representatives $(1,1,1,0,0)$, $(3,3,3,0,0)$, $(1,3,3,2,0)$, $(3,1,1,2,0)$ and $(1,1,3,0,0)$ given in the proof of Corollary 2.7 in \cite{dawsey} are the ones whose orbits exhaust the hypergeometric terms in $\eqref{bigexp}$. 
We denote the $q^2\cdot {_{3}}F_{2}$ terms corresponding to these orbit representatives as $M_1,M_2,\ldots,M_5$ respectively. Then, $\eqref{bigexp}$ yields
\begin{align}\label{mex}
&k_3(\langle H\rangle,1)=\frac{1}{2048}
\left[32(q^2-20q+81)\right. \notag \\
&\hspace{.5cm}+h^{5}M_4+2 h^{3}M_1+2 h^{3}M_1+4 hM_5	+2h^{3}M_1+4 hM_5+4 hM_1+4 \overline{h}M_3 \notag\\
&\hspace{.5cm}+2 h^{3}M_4+4 hM_5+4 hM_2+4 \overline{h}M_1+4 hM_5+4 \overline{h}M_5+4 \overline{h}M_5+2 {\overline{h}}^{3}M_3\notag \\
&\hspace{.5cm}+2 h^{3}M_4+4 hM_5+4 hM_5+4 \overline{h}M_5
+4 hM_2+4 \overline{h}M_1+4 \overline{h}M_5+2 {\overline{h}}^{3}M_3  \notag \\
&\hspace{.5cm}+4 hM_4+4 \overline{h}M_3+4 \overline{h}M_5+2{\overline{h}}^{3}M_2
+4 \overline{h}M_5+2 \overline{h}^{3}M_2+\left. 2\overline{h}^{3}M_2+{\overline{h}}^{5}M_3\right].
\end{align}
Using Lemma \ref{lem4} (note that we could not reduce $M_5$), $\eqref{mex}$ yields
\begin{align}\label{mexp}
k_3(\langle H\rangle,1)=\frac{1}{128}\left[2(q^2-20q+81)+2 u(-p)^t
+3q^2\cdot{_{3}}F_{2}\left(\begin{array}{ccc}\chi_4, & \chi_4, & \overline{\chi_4}\\ & \varepsilon, & \varepsilon\end{array}| 1\right) \right].	
\end{align}
Returning back to $\eqref{1g}$, we are now left to calculate $k_3( \langle H\rangle,g)$. Again, we have 
\begin{align}\label{gxandy}
&k_3(\langle H\rangle,g)\notag\\
&=\frac{1}{2048}\sum_{x\neq 0,g}\sum_{y\neq 0,g,x}\left[ (2+h\chi_4(g-x)+\overline{h}\overline{\chi_4}(g-x)) (2+h\chi_4(g-y)+\overline{h}\overline{\chi_4}(g-y))\notag \right.\\
&\times\left. (2+h\chi_4(x-y)+\overline{h}\overline{\chi_4}(x-y))(2+h\chi_4(x)+\overline{h}\overline{\chi_4}(x)) (2+h\chi_4(y)+\overline{h}\overline{\chi_4}(y))\right]. 
\end{align}	
Using the substitutions $Y=yg^{-1}$ and $X=xg^{-1}$, and then using the fact that $h\chi_4(g)=\overline{h}$, \eqref{gxandy} yields
\begin{align*}
&k_3(\langle H\rangle,g)\\
&=\frac{1}{2048}\sum_{x\neq 0,1}\sum_{y\neq 0,1,x}\left[ (2+\overline{h}\chi_4(1-x)+h\overline{\chi_4}(1-x)) (2+\overline{h}\chi_4(1-y)+h\overline{\chi_4}(1-y))\notag \right.\\
&\times\left. (2+\overline{h}\chi_4(x-y)+h\overline{\chi_4}(x-y)) (2+\overline{h}\chi_4(x)+h\overline{\chi_4}(x))(2+\overline{h}\chi_4(y)+h\overline{\chi_4}(y))\right].
\end{align*}
Comparing this with $\eqref{xandy}$ we see that the expansion of the expression inside this summation will consist of the same summation terms as in $\eqref{xandy}$ except that the coefficient corresponding to each summation will 
become the complex conjugate of the corresponding coefficient of the same summation. This means that, to calculate the coefficient of each summation after expanding the expression in $\eqref{gxandy}$, we need to replace each corresponding coefficient in $\eqref{mex}$ by its complex conjugate. 
Now, $\eqref{mexp}$ is the final expression from $\eqref{mex}$, and we see that \eqref{mexp} contains three summands, two of them being real numbers and the other being a ${}_3F_2$ term whose coefficient is also a real number. Then by the foregoing argument, $\eqref{gxandy}$ yields the same value as given in $\eqref{mexp}$. Thus, $\eqref{1g}$ gives that
\begin{align*}
k_3(\langle H\rangle)=\frac{q-1}{768}&\left[2(q^2-20q+81)+2 u(-p)^t+3q^2\cdot  {_{3}}F_{2}\left(\begin{array}{ccc}\chi_4, & \chi_4, & \overline{\chi_4}\\ & \varepsilon, & \varepsilon\end{array}| 1\right)\right].	
\end{align*}
Substituting the above value in $\eqref{tt}$, we complete the proof of the theorem.
\end{proof}
\section{proof of theorem $\ref{asym}$}
Let $m\geq 1$ be an integer. We have observed that the calculations for computing the number of $4$-order cliques in $P^\ast(q)$ become very tedious. 
However, we can have an asymptotic result on the number of cliques of order $m$ in $P^\ast(q)$ as $q\rightarrow\infty$. The method follows along the lines of \cite{wage} and so we prove by the method of induction.
\begin{proof}[Proof of Theorem \ref{asym}]
	Let $\mathbb{F}_q^\ast=\langle g\rangle$. We set a formal ordering of the elements of $\mathbb{F}_q:\{a_1<\cdots<a_q\}$. Let $\chi_4$ be a fixed character on $\mathbb{F}_q$ of order $4$ and let $h=1-\chi_4(g)$. 
	First, we note that the result holds for $m=1,2$ and so let $m\geq 3$. Let the induction hypothesis hold for $m-1$. We shall use the notation `$a_m\neq a_i$' to mean $a_m\neq a_1,\ldots,a_{m-1}$. Recalling \eqref{qq}, we see that
	\begin{align}\label{ss}
	k_m(P^\ast(q))&=\mathop{\sum\cdots\sum}_{a_1<\cdots<a_m}\prod_{1\leq i<j\leq m} \frac{2+h\chi_4(a_i-a_j)+\overline{h}\chi_4^3(a_i-a_j)}{4}\notag \\
	&=\frac{1}{m}\mathop{\sum\cdots\sum}_{a_1<\cdots<a_{m-1}}\left[  \prod_{1\leq i<j\leq m-1}\frac{2+h\chi_4(a_i-a_j)+\overline{h}\chi_4^3(a_i-a_j)}{4}\right.\notag \\
	&\left.\frac{1}{4^{m-1}}\sum\limits_{a_m\neq a_i}\prod_{i=1}^{m-1}\{2+h\chi_4(a_m-a_i)+\overline{h}\chi_4^3(a_m-a_i)\}\right] 
	\end{align}
	In order to use the induction hypothesis, we try to bound the expression $$\sum\limits_{a_m\neq a_i}\prod_{i=1}^{m-1}\{2+h\chi_4(a_m-a_i)+\overline{h}\chi_4^3(a_m-a_i)\}$$
	in terms of $q$ and $m$. We find that 
	\begin{align}\label{dd}
	\mathcal{J}&:=\sum\limits_{a_m\neq a_i} \prod_{i=1}^{m-1}\{2+h\chi_4(a_m-a_i)+\overline{h}\chi_4^3(a_m-a_i)\}\notag \\
	&=2^{m-1}(q-m+1)\notag \\
	&+\sum\limits_{a_m\neq a_i}[(3^{m-1}-1)\text{ number of terms containing expressions in }\chi_4]
	\end{align}
	Each term in \eqref{dd} containing $\chi_4$ is of the form $$2^f h^{i'}\overline{h}^{j'}\chi_4((a_m-a_{i_1})^{j_1}\cdots (a_m-a_{i_s})^{j_s}),$$ where 
	\begin{equation}\label{asy}
		\left.\begin{array}{l}
			0\leq f\leq m-2,\\
			0\leq i',j'\leq m-1,\\
			i_1,\ldots,i_s \in \{1,2,\ldots,m-1\},\\
			j_1,\ldots,j_s \in \{1,3\},\text{ and}\\
			1\leq s\leq m-1.
		\end{array}\right\}
	\end{equation}
Let us consider such an instance of a term containing $\chi_4$. Excluding the constant factor $2^fh^{i'}\overline{h}^{j'}$, we obtain a polynomial in the variable $a_m$. Let $g(a_m)=(a_m-a_{i_1})^{j_1}\cdots (a_m-a_{i_s})^{j_s}$. Using Weil's estimate (Theorem \ref{weil}), we find that
\begin{align}\label{asy1}
\mid\sum\limits_{a_m\in\mathbb{F}_q}\chi_4(g(a_m))\mid\leq (j_1+\cdots+j_s-1)\sqrt{q}.	
\end{align}
Then, using \eqref{asy1} we have
\begin{align}\label{asy2}
	|2^fh^{i'}\overline{h}^{j'} \sum\limits_{a_m}\chi_4(g(a_m))|&\leq 2^{f+i'+j'}(j_1+\cdots+j_s-1)\sqrt{q}\notag \\
	&\leq 2^{3m-4}(3m-4)\sqrt{q}\notag \\
	&\leq 2^{3m}\cdot 3m\sqrt{q}.
\end{align}
Noting that the values of $\chi_4$ are roots of unity, using \eqref{asy2}, and using \eqref{asy} and the conditions therein, we obtain
	\begin{align*}
	&\mid 2^f h^{i'}\overline{h}^{j'}\sum\limits_{a_m\neq a_i}\chi_4(g(a_m))\mid\\
	&=\mid 2^fh^{i'}\overline{h}^{j'}\left\lbrace \sum\limits_{a_m}\chi_4(g(a_m))-\chi_4(g(a_1))-\cdots-\chi_4(g(a_{m-1})) \right\rbrace \mid\\
	&\leq 2^{3m}\cdot 3m\sqrt{q}+2^{2m-3}\\ 
	&\leq 2^{2m}(1+2^m\cdot 3m\sqrt{q}),
	\end{align*}
	that is,
	$$-2^{2m}(1+2^m\cdot 3m\sqrt{q})\leq 2^f h^{i'}\overline{h}^{j'}\sum\limits_{a_m\neq a_i}\chi_4(g(a_m))\leq 2^{2m}(1+2^m\cdot 3m\sqrt{q}).$$
	Then, \eqref{dd} yields
	\begin{align*}
	&2^{m-1}(q-m+1)-2^{2m}(1+2^m\cdot 3m\sqrt{q})(3^{m-1}-1)\\
	&\leq \mathcal{J}\\
	&\leq 2^{m-1}(q-m+1)+2^{2m}(1+2^m\cdot 3m\sqrt{q})(3^{m-1}-1)	
	\end{align*}
	and thus, \eqref{ss} yields
	\begin{align}\label{asy3}
	&[2^{m-1}(q-m+1)-2^{2m}(1+2^m\cdot 3m\sqrt{q})(3^{m-1}-1)]\times\frac{1}{m\times 4^{m-1}}k_{m-1}(P^\ast(q))\notag\\
	&\leq k_m(P^\ast(q))\notag \\
	&\leq [2^{m-1}(q-m+1)+2^{2m}(1+2^m\cdot 3m\sqrt{q})(3^{m-1}-1)]\times\frac{1}{m\times 4^{m-1}}k_{m-1}(P^\ast(q))
\end{align}
	Dividing by $q^m$ throughout in \eqref{asy3} and taking $q\rightarrow \infty$, we have
	\begin{align}\label{ff}
	&\lim_{q\rightarrow \infty}\frac{2^{m-1}(q-m+1)-2^{2m}(1+2^m\cdot 3m\sqrt{q})(3^{m-1}-1)}{m\times 4^{m-1}\times q}\lim_{q\rightarrow \infty}\frac{k_{m-1}(P^\ast(q))}{q^{m-1}}\notag \\ 
	&\leq \lim_{q\rightarrow \infty}\frac{k_m(P^\ast(q))}{q^m}\notag \\
	&\leq \lim_{q\rightarrow \infty}\frac{2^{m-1}(q-m+1)+2^{2m}(1+2^m\cdot 3m\sqrt{q})(3^{m-1}-1)}{m\times 4^{m-1}\times q}\lim_{q\rightarrow \infty}\frac{k_{m-1}(P^\ast(q))}{q^{m-1}}
	\end{align}
	Now, using the induction hypothesis and noting that
	\begin{align*}
	&\lim\limits_{q\to\infty}\frac{2^{m-1}(q-m+1)\pm 2^{2m}(1+2^m\cdot 3m\sqrt{q})(3^{m-1}-1)}{m\times 4^{m-1}q}\\
	&=\frac{1}{m\times 4^{m-1}}2^{m-1}\\
	&=\frac{1}{m\times 2^{m-1}}	,
	\end{align*} 
	we find that both the limits on the left hand side and the right hand side of \eqref{ff} are equal. This completes the proof of the result.	 
\end{proof}
Taking $m=3$ in Theorem \ref{asym}, we find that 
$$\lim\limits_{q\to\infty}\dfrac{k_3(P^\ast(q))}{q^3}=\frac{1}{48}.$$
We obtain the same limiting value from Theorem \ref{thm1} as well.
\par Taking $m=4$ in Theorem \ref{thm2} and Theorem \ref{asym}, we obtain the following corollary which is also evident from Table \ref{Table-1}.
\begin{corollary}\label{cor1}
	We have 
	\begin{align*}
	\lim\limits_{q\to\infty} {_{3}}F_{2}\left(\begin{array}{ccc}
	\chi_4, & \chi_4, & \chi_4^3 \\
	& \varepsilon, & \varepsilon
	\end{array}| 1\right)=0.
	\end{align*}
\end{corollary}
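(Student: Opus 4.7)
The plan is to extract the limit of the hypergeometric function by equating the asymptotic from Theorem \ref{asym} at $m=4$ with the exact closed form from Theorem \ref{thm2}, solving for the unknown $\lim_{q\to\infty} {_3F_2}(\cdot)$. This is a pure bookkeeping/comparison argument; no new character-sum work is needed.

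First I would rewrite the identity in Theorem \ref{thm2} as
\begin{equation*}
\frac{k_4(P^\ast(q))}{q^4} \;=\; \frac{q-1}{3072\,q^3}\bigl[2(q^2-20q+81) + 2u(-p)^t\bigr] \;+\; \frac{(q-1)}{1024\,q}\cdot{_3F_2}(\cdot),
\end{equation*}
and then let $q\to\infty$. The left-hand side has a known limit by Theorem \ref{asym} with $m=4$, namely $1/(2^{\binom{4}{2}}\cdot 4!) = 1/1536$. The first polynomial chunk on the right contributes $2/3072 = 1/1536$ in the limit. The prefactor $(q-1)/(1024q)$ of the hypergeometric term tends to $1/1024$, so once I justify that the hypergeometric term has a limit at all, the equation becomes $1/1536 = 1/1536 + (1/1024)\lim {_3F_2}(\cdot)$, forcing the limit to be $0$.

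The one thing that needs a genuine estimate is the "error" contribution $2u(-p)^t$. From $q = u^2 + 2v^2$ one has $|u| \leq \sqrt{q}$, and from $q = p^{2t}$ one has $|(-p)^t| = p^t = \sqrt{q}$, so $|2u(-p)^t| \leq 2q$. Therefore this summand contributes at most $O(1/q)$ to $k_4(P^\ast(q))/q^4$ and vanishes in the limit. Existence of $\lim {_3F_2}(\cdot)$ is not really a separate issue: rearranging the displayed equation gives
\begin{equation*}
\frac{1}{1024}\cdot{_3F_2}(\cdot) \;=\; \frac{k_4(P^\ast(q))}{q^4}\cdot\frac{q}{q-1} \;-\; \frac{1}{3072\,q^3}\bigl[2(q^2-20q+81) + 2u(-p)^t\bigr],
\end{equation*}
and every term on the right is already known to have a limit as $q\to\infty$, so the left-hand side inherits one.

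The main obstacle, which is really just a sanity check rather than an obstacle, is making sure the bound on $u(-p)^t$ is strong enough to kill that term; the two equalities $q = u^2+2v^2$ and $q = p^{2t}$ do this immediately. No delicate analysis of the hypergeometric function itself is required — the corollary is essentially the statement that the hypergeometric contribution is the subleading (asymptotically negligible) piece of the clique count formula.
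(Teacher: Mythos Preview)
Your argument is correct and follows essentially the same route as the paper: combine the limit from Theorem~\ref{asym} at $m=4$ with the formula of Theorem~\ref{thm2}, note that the polynomial part already accounts for the limit $1/1536$, and conclude that the hypergeometric contribution must vanish. Your write-up is in fact more careful than the paper's, since you explicitly bound $|u(-p)^t|\le q$ via $q=u^2+2v^2$ and $q=p^{2t}$ and you justify existence of the limit by solving for ${_3F_2}(\cdot)$ first.
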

\begin{proof}
Putting $m=4$ in Theorem \ref{asym}, we have 
\begin{align}\label{eqn1-cor1}
\lim\limits_{q\to\infty}\dfrac{k_4(P^\ast(q))}{q^4}=\frac{1}{1536}.
\end{align}
Putting $m=4$ in Theorem \ref{thm2}, we have 
\begin{align}\label{eqn2-cor1}
\lim\limits_{q\to\infty}\dfrac{k_4(P^\ast(q))}{q^4}=\frac{1}{1536}+3\times \lim\limits_{q\to\infty} {_{3}}F_{2}\left(\begin{array}{ccc}
\chi_4, & \chi_4, & \chi_4^3 \\
& \varepsilon, & \varepsilon
\end{array}| 1\right).
\end{align}
Combining \eqref{eqn1-cor1} and \eqref{eqn2-cor1}, we complete the proof.
\end{proof}
\section{Acknowledgements}
We are extremely grateful to Ken Ono for previewing a preliminary version of this paper and for his helpful comments.


\begin{thebibliography}{99}
	
\bibitem{jamesalex2}
J. Alexander, {\it Selected results in combinatorics and graph theory}, Thesis (Ph.D.) University of Delaware (2016).
	
\bibitem{atanasov2014certain}
R. Atanasov, M. Budden, J. Lambert, K. Murphy and A. Penland, {\it On certain induced subgraphs of Paley graphs}, Acta Univ. Apulensis Math. Inform. 40 (2014), 51--65.
	
\bibitem{berndt}
B. C. Berndt, K. S. Williams and R. J. Evans, {\it Gauss and Jacobi sums}, Canadian Mathematical Society Series of Monographs and Advanced Texts, A Wiley-Interscience Publication, John Wiley \& Sons, Inc., New York, 1998.
	
\bibitem{BB}
A. Bhowmik and R. Barman, {\it On a Paley-type graph on $\mathbb{Z}_n$}, Graphs and Combinatorics 38 (2022), no. 2, Paper No. 41, 25 pp. 

\bibitem{chao}
C. Chao, {\it On the classification of symmetric graphs with a prime number of vertices}, Transactions of the American Mathematical Society 158 (1971), 247--256.
	
\bibitem{dawsey}
M. L. Dawsey and D. McCarthy, {\it Generalized Paley graphs and their complete subgraphs of orders three and four},  Research in the Mathematical Sciences 8 (2021), no. 2, Paper No. 18,  23 pp.
	
\bibitem{erdos}
P. Erd$\ddot{\text{o}}$s, {\it On the number of complete subgraphs contained in certain graphs},  Magyar Tud. Akad. Mat. Kutat{\'o} Int. K{\"o}zl 7 (1962), 459--464.
	
\bibitem{evans1981number}
R.J. Evans, J.R. Pulham and J. Sheehan, {\it On the number of complete subgraphs contained in certain graphs}, Journal of Combinatorial Theory, Series B, 3 (1981), 364--371.
	
\bibitem{goodman}
A. W. Goodman, {\it On sets of acquaintances and strangers at any party}, The American Mathematical Monthly 66 (1959), 778--783.
	
\bibitem{greene}
J. Greene, {\it Hypergeometric functions over finite fields},  Trans. Amer. Math. Soc. 301 (1987), 77--101.
	
\bibitem{greene2}
J. Greene, {\it Character Sum Analogues for Hypergeometric and Generalized Hypergeometric Functions over Finite Fields}, Ph.D. thesis, Univ. of Minnesota, Minneapolis, 1984.
	
\bibitem{katre} 
S. A. Katre and A. R. Rajwade, {\it Resolution of the sign ambiguity in the determination of the cyclotomic numbers of order 4 and the corresponding Jacobsthal sum}, Mathematica Scandinavica 60 (1987), 52--62.
	
	
\bibitem{lim2006generalised}
T. K. Lim and C. E. Praeger, {\it On generalised Paley graphs and their automorphism groups}, Michigan Mathematical Journal 58 (2009), 293--308. 

\bibitem{mccarthy3}
D. McCarthy, {\it Transformations of well-poised hypergeometric functions over finite  fields}, Finite Fields and Their Applications, 18 (2012), no. 6, 1133--1147.

\bibitem{ono} K. Ono, \textit{Values of Gaussian hypergeometric series}, Trans. Amer. Math. Soc. 350 (1998), no. 3, 1205--1223.

\bibitem{ono2}
K. Ono, {\it The web of modularity: arithmetic of the coefficients of modular forms and $q$-series}, CBMS Regional Conference Series in Mathematics, 102, Amer. Math. Soc., Providence, RI, 2004.

\bibitem{paleyp}
R. E. A. C. Paley, {\it On orthogonal matrices}, Journal of Mathematics and Physics 12 (1933), 311--320.
	
\bibitem{peisert}
W. Peisert, {\it All self-complementary symmetric graphs}, Journal of Algebra 240 (2001), 209--229.
	
\bibitem{thomason}
A. G. Thomason, {\it Partitions of Graphs}, Ph.D. thesis, Cambridge University (1979).
	
\bibitem{wage}
N. Wage, {\it Character sums and Ramsey properties of generalized Paley graphs}, Integers 6 (2006),  article number 18.
	
\bibitem{zhang}
H. Zhang, {\it Self‐complementary symmetric graphs}, Journal of Graph Theory 16 (1992), 1--5.
	
\end{thebibliography}
\end{document}